\def\arrow#1#2{\smash{\mathop{\longrightarrow}\limits^{#1}_{#2}}} 
\def\arrowdown#1#2{\Big\downarrow \rlap{$\vcenter{\hbox{$\scriptstyle#2$}}$}
{\hbox to -10pt{\hss{$\vcenter{\hbox{$\scriptstyle#1$}}$}}}}
\def\arrowup#1#2{\Big\uparrow \rlap{$\vcenter{\hbox{$\scriptstyle#2$}}$}
{\hbox to -10pt{\hss{$\vcenter{\hbox{$\scriptstyle#1$}}$}}}}
\newtheorem{thm}{Theorem}[section]
\newtheorem{prop}[thm]{Proposition}
\newtheorem{lemma}[thm]{Lemma}
\newtheorem{Definition}[thm]{Definition}
\newtheorem{Hypothesis}[thm]{Hypothesis}
\newtheorem{Example}[thm]{Example}
\newtheorem{cor}[thm]{Corollary}
\newcounter{ex}[section]
\numberwithin{equation}{section}
\numberwithin{thm}{section}
 \renewcommand{\O}{{\mathcal O}}
\newcommand{\K}{\overline{k}}
\def\thfill{\null\nobreak\hfill}
\def\endproof{\thfill\vbox{\hrule
  \hbox{\vrule\hbox to 5pt{\vbox to 5pt{\vfil}\hfil}\vrule}\hrule}}
\begin{document}
\title[Derived invariants]{Derived category invariants and L-series}

\author[Ph. Cassou-Nogu\`es]{Ph. Cassou-Nogu\`es }
\address{Philippe Cassou-Nogu\`es, IMB\\ Univ. Bordeaux 1\\  33405 Talence, France.\\}
 
 \email{Philippe.Cassou-Nogues@math.u-bordeaux1.fr}

\author[T. Chinburg]{T. Chinburg*}\thanks{*Supported by NSF Grant \#DMS0801030}
\address{Ted Chinburg, Dept. of Math\\Univ. of Penn.\\Phila. PA. 19104, U.S.A.}
\email{ted@math.upenn.edu}

\author[B. Erez]{B. Erez }
\address{Boas Erez, IMB, Univ. Bordeaux 1, 33405 Talence, France.\\}

 \email{Boas.Erez@math.u-bordeaux1.fr}

\author[M. Taylor]{M. J. Taylor}
\address{Martin J. Taylor, Merton College \\ 
Oxford OX1 4JD, U.K.}
\email{martin.taylor@merton.ox.ac.uk}

\date{\today}

\begin{abstract}
We relate invariants in derived categories associated to
tame actions of finite groups on projective varieties over a finite field
to zeros of L-functions
\end{abstract}

\maketitle

\section{Introduction}
\label{s:intro}


A recurring theme in the study of values of L-functions of arithmetic
schemes is that these should be related to Euler characteristics
of various kinds.   Behind the cohomology groups needed to define
such Euler characteristics are hypercohomology complexes
in derived categories.  In this paper we consider how to determine
the additional information contained in such complexes beyond what
is seen by Euler characteristics.  In the geometric situations we
consider, this additional information takes the form of extension classes.
Our main result is that two natural extension classes constructed from
\'etale and coherent cohomology differ by a numerical invariant
which is the reciprocal of the zero of an $L$-function.  This 
suggests that it may be fruitful to study relationships between
derived category invariants and L-functions in more general
contexts, e.g. for projective schemes over $\mathbb{Z}$.

We will now describe the contents  of this paper.
In Section 2  we consider the following geometric situation.  Let $X$ be a smooth
projective variety over the algebraic closure $\K$ of a finite field $k$
having a tame, generically free action over $\K$ of a finite group $G$.
We suppose that the cohomology groups of $\O_X$ vanish except in
dimensions $0$ and $n$ for some integer $n > 0$ and that
the characteristic $p$ of $k$ divides $\# G$.  In this
case, the isomorphism class of the hypercohomology complex $H^\bullet(X,\O_X)$
in the derived category of the homotopy category of $\K[ G]$-modules
is determined by its Euler characteristic and an extension class
$\beta(X,G)$ in the one-dimensional $\K$-vector space 
$\mathrm{Ext}^{n+1}_{\K[ G]}(H^n(X,\O_X),H^0(X,\O_X))$. 
Similarly, the isomorphism class of $H^\bullet_{et}(X,k)$ in the derived
category of the homotopy category of complexes of $k[G]$-modules is determined
by its Euler characteristic together with an extension class
$\gamma(X,G)$ in the one-dimensional $k$-vector space 
$\mathrm{Ext}^{n+1}_{k [G]}(H^n_{et}(X,k),H^0_{et}(X,k))$.

In Theorem 2.9    we show that $$\beta(X,G) = 1 \otimes \gamma(X,G)$$
relative to a natural isomorphism
$$\K \otimes_k \mathrm{Ext}^{n+1}_{k [G]}(H^n_{et}(X,k),H^0_{et}(X,k)) = \mathrm{Ext}^{n+1}_{\K [G]}(H^n(X,\O_X),H^0(X,\O_X))$$
induced by  the map $k \to \mathbb{G}_a$ of \'etale sheaves on $X$.  One can thus think of $$k\cdot \beta(X,G) = k \cdot \gamma(X,G)$$ as the ``\'etale $k$-line" inside
$\mathrm{Ext}^{n+1}_{\K [G]}(H^n(X,\O_X),H^0(X,\O_X))$.  

In Section 3 we make some additional hypotheses on $G$ and $X$ described in Hypothesis \ref{hyp:dimension2}. We assume in particular that the $p$-Sylow subgroups of $G$ are cyclic and non trivial. We let $C$ be a $p$-Sylow subgroup of $G$.  Under these hypotheses  we  define a $k$-line $$k \cdot \alpha(X,G)$$
inside $
\mathrm{Ext}^{n+1}_{\K [C]}(H^n(X,\O_X),H^0(X,\O_X))$ which is 
associated to a model $Y_0$ over $k$ of the quotient $Y = X/G$
of $X$ by the action of $G$.   One can think
of $k \cdot \alpha(X,G)$ as a $k$-line determined by 
coordinates for the one-dimensional $\K$-vector
space $\mathrm{Ext}^{n+1}_{\K[ C]}(H^n(X,\O_X),H^0(X,\O_X))$
which arises  from the model $Y_0$. 

The restriction map induces an isomorphism of $\K$-vector spaces 
$$\mathrm{Ext}^{n+1}_{\K [G]}(H^n(X,\O_X),H^0(X,\O_X))\to \mathrm{Ext}^{n+1}_{\K [C]}(H^n(X,\O_X),H^0(X,\O_X)). $$
We consider $k.\beta(X,G)$ as a $k$-line of $\mathrm{Ext}^{n+1}_{\K [C]}(H^n(X,\O_X),H^0(X,\O_X))$ via this isomorphism. Our goal is to compare 
in $\mathrm{Ext}^{n+1}_{\K [C]}(H^n(X,\O_X),H^0(X,\O_X))$ the \'etale $k$-line with the $k$-line provided by the model $Y_0$. To be more precise 
our main result, Theorem
\ref{thm:secondclass} and its corollaries, is that 
$$k \cdot \alpha(X,G) = \zeta \cdot k \cdot \beta(X,G)$$
for a constant $\zeta \in \K^*$ such that
$$\mu(X,G) = \zeta^{1-\#k} \in  k^*$$
is independent of all choices and is the reciprocal of
a zero of an $L$-function associated to $X$.  If $n$
is odd, this $L$-function is  the numerator of
the mod $p$ zeta function of $Y_0$ over $k$.  If $n$ is even, the
$L$-function is the denominator of the mod $p$ zeta function
of the variety $X_0$ which is the quotient of $X$ by 
the group  generated by a lift to $X$ of the arithmetic
Frobenius in $\mathrm{Gal}(Y/Y_0)$.   If $X$ is an elliptic curve
and $k = \mathbb{Z}/p$ then 
$\mu(X,G)$ is simply the Hasse invariant associated to
$Y_0$ (c.f. Example \ref{ex:ellcurves}). 

In the last section of this paper we provide examples of projective varieties  of arbitrary large dimension, endowed with an  action of a cyclic group of order $p$  
for which the  hypotheses of  Theorem \ref{thm:secondclass} are fulfilled. 
\medbreak

\noindent \thanks{{\bf Acknowledgments:} The first author would like to thank the University of  Pennsylvania  for its hospitality during work on this paper.}
 
\section{Varieties with two non-vanishing cohomology groups}
\label{s:hypo}

Let $k$ be a finite field of order $q = p^f$, where $p$
is a prime, and let $\K$ be an algebraic closure of $k$.
We will suppose that $G$ is a finite group of order divisible by $p$ acting
tamely and generically freely over $\K$ on a smooth projective variety $X$ 
over $\K$ of dimension $d$.  Let $\pi:X \to Y= X/G$ be
the quotient morphism.  
If $\mathcal{F}$ is a coherent $G$-sheaf
on $X$, we denote $H^i(X,\mathcal{F})$ by $H^i(\mathcal{F})$.

\begin{Hypothesis}
\label{hyp:dimension}
There is an integer $n \ge 1$ such that $H^i(\O_X) \ne \{0\}$
if and only if $i \in \{0,n\}$.
\end{Hypothesis}

\begin{lemma}
\label{lem:coherent}
The coherent hypercohomology complex $H^\bullet(\O_X)$ is isomorphic in the derived category $D(\K G)$ of the homotopy category of complexes of $\K [G]$-modules to
a perfect complex $P^\bullet$ of $\K [G]$-modules which has
trivial terms outside degrees in the interval $[0,n]$.  This
complex defines an exact sequence
\begin{equation}
\label{eq:exact}
0 \to H^0(\O_X) \to P_0 \to \cdots \to P_n \to H^n(\O_X) \to 0
\end{equation}
and thereby an extension class $\beta(X,G)$ in
$\mathrm{Ext}^{n+1}_{\K [G]}(H^n(\O_X),H^0(\O_X))$. 
\end{lemma}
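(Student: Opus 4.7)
The plan is to produce a perfect complex representative for the hypercohomology complex $H^\bullet(\O_X)$, truncate it so that it is concentrated in degrees $[0,n]$, and then read off the required exact sequence and extension class.

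The existence of a perfect representative is where the tameness of the $G$-action enters. Since $G$ acts tamely and generically freely on the smooth projective variety $X$, by the standard theory of tame equivariant sheaves (cf.\ work of Chinburg, K\"ock, and others), the complex $H^\bullet(\O_X)$ is represented in $D(\K G)$ by a perfect complex of $\K [G]$-modules. Fix any such representative $P^\bullet$; by Hypothesis \ref{hyp:dimension} its cohomology is concentrated in degrees $0$ and $n$, so we may assume $P^i=0$ for $i\notin[M,N]$ with $M\le 0$ and $N\ge n$, each $P^i$ finitely generated projective.

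I then truncate $P^\bullet$ so that $M=0$ and $N=n$. For the upper truncation, if $N>n$ then $H^N(P^\bullet)=0$ forces $d^{N-1}$ to be surjective; projectivity of $P^N$ splits the short exact sequence $0\to\ker(d^{N-1})\to P^{N-1}\to P^N\to 0$, so that $P^{N-1}=\ker(d^{N-1})\oplus P^N$, and replacing $P^{N-1}$ by $\ker(d^{N-1})$ while deleting $P^N$ yields a quasi-isomorphic perfect complex with $N$ decreased by one. Iterating reaches $N=n$. For the lower truncation, if $M<0$ then $H^M(P^\bullet)=0$ makes $d^M\colon P^M\hookrightarrow P^{M+1}$ injective. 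Here I use the crucial fact that $\K [G]$ is a quasi-Frobenius ring, so every finitely generated projective $\K [G]$-module is also injective; an injection out of the injective module $P^M$ therefore splits, giving $P^{M+1}=P^M\oplus L$ for some finitely generated projective $L$. Deleting $P^M$ while replacing $P^{M+1}$ by $L$ yields a quasi-isomorphic perfect complex concentrated in $[M+1,n]$, and iterating reduces to $M=0$.

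With $P^\bullet$ now concentrated in $[0,n]$, the vanishing of $H^i(P^\bullet)$ for $i\notin\{0,n\}$ together with $P^{-1}=P^{n+1}=0$ makes the complex exact at $P^1,\dots,P^{n-1}$, identifies $H^0(\O_X)$ with $\ker(d^0)\subset P_0$, and identifies $H^n(\O_X)$ with $P_n/{\rm image}(d^{n-1})$. Splicing in these two identifications yields the exact sequence \eqref{eq:exact}, which by definition is a Yoneda representative of a class $\beta(X,G)\in\Ext^{n+1}_{\K [G]}(H^n(\O_X),H^0(\O_X))$. Independence of the choice of $P^\bullet$ follows from the standard identification of this Yoneda class with the connecting morphism $H^n(\O_X)[-n]\to H^0(\O_X)[1]$ in the Postnikov triangle of any object of $D(\K G)$ whose cohomology is concentrated in degrees $0$ and $n$. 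The main obstacle is the lower truncation, which genuinely uses the quasi-Frobenius property of $\K [G]$; without it the smart truncation $\tau^{\ge 0}(P^\bullet)$ would in general fail to be perfect.
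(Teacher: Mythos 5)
Your proposal is correct and follows essentially the same route as the paper: the paper likewise cites the tameness of the $G$-action (via Nakajima's theorem) to get a perfect representative and then simply states that one truncates using Hypothesis \ref{hyp:dimension}. You have merely written out in full the truncation step (including the use of the quasi-Frobenius property of $\K[G]$ for the lower truncation) that the paper leaves implicit.
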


\begin{proof}  By a result of Nakajima \cite{Nak}, $H^\bullet(\O_X)$
is isomorphic to a perfect complex in $D(\K G)$ because
the action of $G$ on $X$ is tame.  Because of
hypothesis \ref{hyp:dimension}, we can truncate this complex to
arrive at $P^\bullet$.  
\end{proof}

\begin{Definition}
\label{def:frobmod}  Let $F:\K \to \K$ be the arithmetic Frobenius automorphism over
$\K$, so that $F(\alpha) = \alpha^q$ for $\alpha \in \K$.   A $k$-linear map $T:M_1 \to M_2$ between vector spaces over $\K$ will be called 
semilinear  (resp. anti-semilinear) if $T(\alpha \cdot m_1) = F(\alpha) T(m_1) $ (resp. $T(\alpha \cdot m_1) = F^{-1}(\alpha) T(m_1) $) for $\alpha \in \K$
and $m_1 \in M_1$.  
\end{Definition}

\begin{lemma}
\label{lem:extcomp}
 Suppose that $\ell$ is a field of characteristic $p$, and that there is 
 an exact sequence of $\ell G$-modules 
 \begin{equation}
 \label{eq:exactP}
 0 \to \ell \to P_0 \to \cdots \to P_n \to M \to 0
 \end{equation}
  in which $P_i$ is projective and finitely generated for all $i$.  Then 
  $\mathrm{Ext}^{n+1}_{\ell [G]}(M,\ell)$ is a one-dimensional
  $\ell$ vector space with respect to the multiplication action
  of $\ell$ on $\ell$.  The degeneration of the spectral sequence
$H^p(G,\mathrm{Ext}_{\ell}^q(M,\ell)) \rightarrow \mathrm{Ext}^{p+q}_{\ell [G]}(M,\ell)$
gives an isomorphism
\begin{equation}
\label{eq:degenit}
\mathrm{Ext}^{n+1}_{\ell [G]}(M,\ell) = H^{n+1}(G,\mathrm{Hom}_\ell(M,\ell))
\end{equation}
\end{lemma}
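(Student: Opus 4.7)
The plan is to treat the two assertions of the lemma separately: the spectral sequence degeneration, which is formal, and the one-dimensionality of $\mathrm{Ext}^{n+1}_{\ell[G]}(M,\ell)$, which uses the existence of the length-$(n+1)$ sequence (\ref{eq:exactP}) together with the (tacit) standing assumption $p\mid\#G$ inherited from the paper's global setup.

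For the spectral sequence, since $\ell$ is a field, every $\ell$-module is projective, so $\mathrm{Ext}^{q}_{\ell}(M,\ell)=0$ for all $q\ge 1$. The spectral sequence in question is the Grothendieck spectral sequence for the composition $\mathrm{Hom}_{\ell[G]}(M,-)=(-)^{G}\circ\mathrm{Hom}_{\ell}(M,-)$; its $E_{2}$-page is concentrated in the single row $q=0$, so it collapses at $E_{2}$ and yields the isomorphism (\ref{eq:degenit}) directly.

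For the one-dimensionality, I would apply $\mathrm{Hom}_{\ell}(-,\ell)$ to (\ref{eq:exactP}). Since $\ell[G]$ is a Frobenius (hence self-injective) algebra, the $\ell$-linear dual $P_{i}^{*}:=\mathrm{Hom}_{\ell}(P_{i},\ell)$ of each finitely generated projective $P_{i}$ is again finitely generated projective as an $\ell[G]$-module. Writing $M^{*}=\mathrm{Hom}_{\ell}(M,\ell)$, exactness of $\mathrm{Hom}_{\ell}(-,\ell)$ produces
$$0\longrightarrow M^{*}\longrightarrow P_{n}^{*}\longrightarrow P_{n-1}^{*}\longrightarrow\cdots\longrightarrow P_{0}^{*}\longrightarrow\ell\longrightarrow 0,$$
an exact sequence in which every middle term is projective. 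Splitting this into short exact sequences and applying iterated dimension shifting in group cohomology (using $H^{j}(G,P_{i}^{*})=0$ for $j\ge 1$), one obtains
$$H^{n+1}(G,M^{*})\;\cong\;\widehat{H}^{0}(G,\ell)\;=\;\ell^{G}/N\,\ell,$$
where $N=\sum_{g\in G}g$ is the norm element. The action of $N$ on the trivial module $\ell$ is multiplication by $\#G$, which vanishes because $p\mid\#G$; consequently $\widehat{H}^{0}(G,\ell)=\ell^{G}=\ell$ is one-dimensional over $\ell$. Combined with step one, this gives the lemma.

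The only genuinely delicate points are (i) invoking self-injectivity of $\ell[G]$ so that the $P_{i}^{*}$ remain projective (otherwise dimension shifting on the dual side would be unavailable), and (ii) observing that the final conclusion really does require $p\mid\#G$: without this, $N$ would act invertibly on $\ell$ and the Tate group $\widehat{H}^{0}(G,\ell)$ would vanish, contradicting one-dimensionality. Thus the hypothesis, though not restated in \ref{lem:extcomp}, is essential and must be used.
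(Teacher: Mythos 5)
Your proof is correct, but it reaches the one-dimensionality by a different mechanism than the paper. The paper stays entirely on the $\mathrm{Ext}_{\ell[G]}(-,\ell)$ side: dimension shifting along (\ref{eq:exactP}) in the first variable gives the exact sequence $\mathrm{Hom}_{\ell[G]}(P_0,\ell)\to\mathrm{Hom}_{\ell[G]}(\ell,\ell)\to\mathrm{Ext}^{n+1}_{\ell[G]}(M,\ell)\to 0$, so the group in question is a quotient of $\ell$ and is nonzero precisely because surjectivity of the first map would split the injection $\ell\to P_0$, forcing $\ell$ to be a projective $\ell[G]$-module --- impossible since $p\mid\#G$. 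You instead dualize over $\ell$ first (invoking self-injectivity of $\ell[G]$ so that the $P_i^{*}$ stay projective) and shift dimensions in group cohomology down to $\widehat{H}^{0}(G,\ell)=\ell/N\ell=\ell$. Both arguments hinge on the same two inputs (dimension shifting plus $p\mid\#G$), but the paper's version is slightly leaner: it needs no facts about duals of projectives and no Tate cohomology. One small point in your write-up deserves a word more: the final shift from $H^{1}(G,C_1)$ (where $0\to C_1\to P_0^{*}\to\ell\to 0$) to $\widehat{H}^{0}(G,\ell)$ uses not just $H^{j}(G,P_0^{*})=0$ for $j\ge 1$ but also that the image of $(P_0^{*})^{G}\to\ell^{G}$ is exactly $N\ell$, i.e.\ that $(P_0^{*})^{G}=N\cdot P_0^{*}$ for a projective module over a finite group (Tate-cohomological triviality of projectives, $\widehat{H}^{0}(G,P_0^{*})=0$). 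This is standard but is genuinely more than the vanishing you cite. Your closing remark that $p\mid\#G$ is indispensable is accurate and matches the paper's global hypothesis.
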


\begin{proof}
By dimension shifting via the sequence (\ref{eq:exactP}),
we get an exact sequence
$$\mathrm{Hom}_{\ell [G]}(P_0,\ell) \to \mathrm{Hom}_{\ell [G]}(\ell,\ell) \to \mathrm{Ext}^{n+1}_{\ell [G]}(M,\ell) \to 0.$$  Here $ \mathrm{Hom}_{\ell [G]}(\ell,\ell) = \ell$.  So either $\mathrm{Ext}^{n+1}_{\ell [G]}(M,\ell)$ is a one-dimensional $\ell$-vector space,
or the injection $\ell  \to P_0$ splits.  However, we have assumed
$p$ divides the order of $G$, so $\ell$ is not a projective $\ell [G]$-module and 
the latter alternative is impossible.      
\end{proof}

\begin{cor}
\label{cor:action}
Suppose $\ell  = \K$ and that $T:M \to M$ is a semilinear map commuting with the action of
$G$ on $M$.  There is a $G$-equivariant anti-semilinear endomorphism $T^{-1}$  of $\mathrm{Hom}_{\K}(M,\K)$ defined
by $$T^{-1}(f)(m) = F^{-1}(f(T(m)))\quad \mathrm{for}\quad f \in \mathrm{Hom}_{\K}(M,\K)\quad \mathrm{and}\quad m \in M.$$   Via (\ref{eq:degenit}) this gives an anti-semilinear action 
of $T^{-1}$ on $\mathrm{Ext}^{n+1}_{\K [G]}(M,\K) \cong \K$.
  \end{cor}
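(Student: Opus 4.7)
The plan is to verify in sequence the four assertions hidden in the statement: (i) that $T^{-1}(f)$ actually lies in $\Hom_{\K}(M,\K)$ (i.e.\ is $\K$-linear), (ii) that $T^{-1}$ is anti-semilinear with respect to the scalar action of $\K$ on $\Hom_{\K}(M,\K)$, (iii) that $T^{-1}$ commutes with the $G$-action, and (iv) that the induced endomorphism of $\Ext^{n+1}_{\K[G]}(M,\K)$ under the identification (\ref{eq:degenit}) is again anti-semilinear. Each of these is a short diagram-chase, so the real content is just to set them up carefully.

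For (i), I would fix $f\in \Hom_{\K}(M,\K)$, $\alpha\in \K$ and $m\in M$, and use the semilinearity of $T$ to write $f(T(\alpha m)) = f(F(\alpha)T(m)) = F(\alpha) f(T(m))$. Applying the ring automorphism $F^{-1}$ gives $T^{-1}(f)(\alpha m) = \alpha\cdot T^{-1}(f)(m)$, so $T^{-1}(f)$ is $\K$-linear; additivity is immediate. For (ii), the same bookkeeping yields $T^{-1}(\alpha f)(m) = F^{-1}(\alpha f(T(m))) = F^{-1}(\alpha) T^{-1}(f)(m)$. For (iii), using the contragredient action $(g\cdot f)(m) = f(g^{-1}m)$ and the hypothesis $T\circ g = g\circ T$, one computes $T^{-1}(g\cdot f)(m) = F^{-1}(f(g^{-1}T(m))) = F^{-1}(f(T(g^{-1}m))) = (g\cdot T^{-1}(f))(m)$.

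For (iv), the point is that because $\K$ is a field, $\Ext^q_{\K}(M,\K)=0$ for $q>0$ and the spectral sequence of Lemma \ref{lem:extcomp} degenerates naturally in the coefficient module $\Hom_{\K}(M,\K)$. Hence the $G$-equivariant additive endomorphism $T^{-1}$ of that coefficient module induces an additive endomorphism of $H^{n+1}(G,\Hom_{\K}(M,\K))$, which transports via (\ref{eq:degenit}) to an additive endomorphism of $\Ext^{n+1}_{\K[G]}(M,\K)$. The scalar action of $\alpha\in \K$ on this $\Ext$ group is the one inherited from multiplication by $\alpha$ on the second argument $\K$, which at the cochain level is exactly the scalar action $f\mapsto \alpha f$ on $\Hom_{\K}(M,\K)$. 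Thus the relation $T^{-1}(\alpha f) = F^{-1}(\alpha)T^{-1}(f)$ established in (ii) passes directly to cohomology, giving the claimed anti-semilinearity of $T^{-1}$ on $\Ext^{n+1}_{\K[G]}(M,\K)\cong \K$.

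There is no genuine obstacle: the argument is a sequence of mechanical checks. The one spot that deserves care is the compatibility in step (iv) between the $\K$-module structure on $\Ext^{n+1}_{\K[G]}(M,\K)$ and that on the coefficient module $\Hom_{\K}(M,\K)$, but this is automatic from the functoriality of the spectral sequence once one is explicit about which $\K$-action is meant.
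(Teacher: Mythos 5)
Your proof is correct: all four verifications (linearity of $T^{-1}(f)$, anti-semilinearity, $G$-equivariance, and the passage to $\mathrm{Ext}^{n+1}_{\K[G]}(M,\K)$ via the identification with $H^{n+1}(G,\mathrm{Hom}_{\K}(M,\K))$) check out, including the one delicate point about matching the $\K$-action on the $\mathrm{Ext}$ group with the action $f\mapsto\alpha f$ on the coefficient module. The paper states this corollary without proof, treating it as immediate from Lemma \ref{lem:extcomp} and the definitions, so your write-up simply supplies the routine verification the authors omit, in the only natural way.
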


 The following result is Lemma III.4.13
 of \cite{Milne};  see also \cite[\S XXII.1]{Groth} and  \cite[p. 143]{Mumford}.
 
 \begin{lemma}
 \label{lem:Milne}  Let $V$ be a finite dimensional vector space
 over $\K$, and let $\phi:V \to V$ be a semilinear map.  
 Then $V$ decomposes as
 a direct sum $V = V_s \oplus V_\eta$, where $V_s$ and $V_\eta$ are
 subspaces stable under $\phi$, $\phi$ is bijective on $V_s$ and
 $\phi$ is nilpotent on $V_\eta$.  Moreover, $V_s$ has a basis
 $\{e_1,\ldots,e_t\}$ such that $\phi(e_i) = e_i$ for all $i$.
 It follows that $V^\phi$ is the $k$-vector space having 
 basis $\{e_1,\ldots,e_t\}$ and $\phi - 1:V \to V$ is surjective.
 \end{lemma}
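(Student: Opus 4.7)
The plan is to establish the decomposition via a Fitting-type argument, produce a basis of $\phi$-fixed vectors in $V_s$ using a Lang--Steinberg style result, and then deduce the surjectivity of $\phi - 1$.

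First I would note that for each $i \ge 0$, both $\ker(\phi^i)$ and $\phi^i(V)$ are $\K$-subspaces: since $F$ is bijective on $\K$, the identity $\alpha \cdot \phi^i(v) = \phi^i(F^{-i}(\alpha) v)$ shows that the image $\phi^i(V)$ is $\K$-stable, and $\ker(\phi^i)$ is $\K$-stable analogously. As $V$ is finite dimensional, the ascending chain of kernels and descending chain of images stabilize at a common index $N$. Setting $V_\eta = \ker(\phi^N)$ and $V_s = \phi^N(V)$, the standard Fitting argument gives $V = V_s \oplus V_\eta$, with $\phi$ nilpotent on $V_\eta$ and bijective on $V_s$.

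Next, I would construct the desired basis of $V_s^\phi$ in two steps. First, any $k$-linearly independent subset $\{v_1,\dots,v_r\} \subset V^\phi$ is automatically $\K$-linearly independent: given a shortest nontrivial $\K$-relation $\sum_{i=1}^r \alpha_i v_i = 0$ normalized so that $\alpha_1 = 1$, applying $\phi$ and subtracting yields $\sum_{i \ge 2}(F(\alpha_i) - \alpha_i) v_i = 0$; minimality forces each $\alpha_i \in k$, contradicting the hypothesis. Hence $\dim_k V^\phi \le \dim_\K V_s$. To obtain the reverse inequality, fix a $\K$-basis $v_1,\dots,v_t$ of $V_s$ and let $A = (a_{ij}) \in \mathrm{GL}_t(\K)$ be defined by $\phi(v_j) = \sum_i a_{ij} v_i$. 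A fixed basis $e_j = \sum_i c_{ij} v_i$ then corresponds to an invertible matrix $C = (c_{ij})$ satisfying $A \cdot F(C) = C$, equivalently $F(C) \cdot C^{-1} = A^{-1}$ (with $F$ acting entrywise). Since $\mathrm{GL}_t$ is a connected algebraic group over $\K$, Lang's theorem produces such a $C$. I expect this step --- solving the semilinear matrix equation $F(C) C^{-1} = A^{-1}$ over $\K$ --- to be the main obstacle, as it is where the algebraic closedness of $\K$ and the finiteness of $k$ genuinely enter.

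Finally, once the $e_i$ are obtained, the first three assertions of the lemma follow directly, and the linear-independence step above gives $V^\phi = k e_1 \oplus \cdots \oplus k e_t$ (note that $V^\phi = V_s^\phi$ because $V_\eta^\phi = 0$ by nilpotence). For surjectivity of $\phi - 1$: on $V_\eta$, $\phi$ is nilpotent and so $\phi - 1 = -(1 - \phi)$ is invertible; on each $\K$-line $\K e_i \subset V_s$, the restriction of $\phi - 1$ is the map $\alpha e_i \mapsto (F(\alpha) - \alpha) e_i$, and $F - 1 : \K \to \K$ is surjective because the separable polynomial $X^q - X - \beta$ has a root in $\K = \overline{k}$ for every $\beta \in \K$. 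Combining these observations yields surjectivity of $\phi - 1$ on all of $V$.
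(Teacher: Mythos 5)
Your proof is correct. Note, however, that the paper itself offers no proof of this lemma: it simply cites Lemma III.4.13 of Milne's \emph{\'Etale cohomology} (with pointers to SGA~7, Expos\'e XXII, and Mumford, \emph{Abelian varieties}, p.~143). So the comparison is really with those references. Your Fitting decomposition $V = \phi^N(V) \oplus \ker(\phi^N)$ and the Dedekind-style independence argument showing $\dim_k V^\phi \le \dim_{\K} V_s$ match the standard treatment. Where you diverge is in producing the fixed basis of $V_s$: the cited sources argue by induction on $\dim V_s$, producing one fixed vector at a time by solving an explicit Artin--Schreier-type equation $\sum_i a_i x^{q^i} = 0$ over the algebraically closed field $\K$, whereas you package the entire construction into a single application of Lang's theorem for the connected group $\mathrm{GL}_t$, solving $F(C)C^{-1} = A^{-1}$. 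Your matrix computation $AF(C) = C$ is right (and $A$ is invertible precisely because $\phi$ is bijective on $V_s$), and the two forms of the Lang map $g \mapsto g^{-1}F(g)$ and $g \mapsto F(g)g^{-1}$ are interchangeable, so this step is sound. The trade-off is that Lang's theorem is a heavier input --- and in some developments it is itself deduced from exactly this semilinear descent statement (a Hilbert 90 for $\mathrm{GL}_t$ over $\K/k$), so if one wants a self-contained elementary proof the direct Artin--Schreier induction is preferable; but taking Lang's theorem as known (via its independent geometric proof that the Lang map is finite \'etale and dominant), your argument is complete. Your final step, splitting $\phi - 1$ over $V_\eta$ (where $1 - \phi$ is invertible by the nilpotent geometric series, valid for additive maps) and over each line $\K e_i$ (where it becomes $\alpha \mapsto \alpha^q - \alpha$, surjective on $\overline{k}$ by separability of $X^q - X - \beta$), is exactly right.
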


We have
an exact Artin-Schreier sequence of \'etale sheaves on $X$ given by
\begin{equation}
\label{eq:AS}
0 \arrow{}{} k \arrow{}{} \mathbb{G}_a \arrow{F-1}{} \mathbb{G}_a \to 0
\end{equation}
where $F:\mathbb{G}_a \to \mathbb{G}_a$ is the arithmetic Frobenius
morphism defined by $\alpha \mapsto \alpha^q = F(\alpha)$ for $\alpha$ a local section of $\mathbb{G}_a$.  By \cite[Remark III.3.8]{Milne}, there is an isomorphism
$H^\bullet(X,\O_X) \to H^\bullet_{et}(X,\mathbb{G}_a)$ in the derived category,
giving isomorphisms $H^j(\O_X) \to H^j_{et}(X,\mathbb{G}_a)$ for all $j$.

\begin{lemma}
\label{lem:splitting}
The long exact cohomology sequence associated to (\ref{eq:AS}) splits into
short exact sequences
\begin{equation}
\label{eq:exacter}
0 \arrow{}{} H^i_{et}(X,k) \arrow{}{} H^i(\O_X) \arrow{F-1}{} H^i(\O_X) \arrow{}{}
0
\end{equation}
for all $i$. The terms of this sequence are trivial if $i \not \in \{0,n\}$.  When $i = 0$,
one has $H^0_{et}(X,k) = k$ and $H^0(\O_X) = \K$.  When $i = n$, there is a 
$\K G$-module isomorphism
\begin{equation}
\label{eq:dirsum}
H^{n}(\O_X) = H^n(\O_X)_{F,s} \oplus H^n(\O_X)_{F,\eta}
\end{equation}
arising from Lemma \ref{lem:Milne} in which 
$F$ is an isomorphism on $ H^i(\O_X)_{F,s}$ and nilpotent on $H^i(\O_X)_{F,\eta}$.  
The sequence (\ref{eq:exacter}) with $i = n$ shows $H^n(\O_X)^F = H^n_{et}(X,k)$ and $H^n(\O_X)_{F,s} = \K \otimes_k H^n_{et}(X,k)$.
\end{lemma}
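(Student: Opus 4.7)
The plan is to derive the entire statement from the long exact cohomology sequence attached to the Artin--Schreier sequence (\ref{eq:AS}), applied in conjunction with the identification $H^j_{et}(X,\mathbb{G}_a)=H^j(\O_X)$ and Lemma \ref{lem:Milne}. First I would write out the long exact sequence
$$\cdots \to H^{i-1}(\O_X) \arrow{F-1}{} H^{i-1}(\O_X) \to H^{i}_{et}(X,k) \to H^{i}(\O_X) \arrow{F-1}{} H^{i}(\O_X) \to \cdots$$
and observe that, since $X$ is projective, every $H^i(\O_X)$ is a finite-dimensional $\K$-vector space on which $F$ acts semilinearly in the sense of Definition \ref{def:frobmod}. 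Lemma \ref{lem:Milne} then supplies, in particular, the surjectivity of $F-1$ on every such space. That surjectivity forces each connecting map in the long exact sequence to vanish, cutting it into the short exact sequences (\ref{eq:exacter}).

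Once the short exact sequences are in place, the vanishing results for $i \notin \{0,n\}$ are immediate: Hypothesis \ref{hyp:dimension} gives $H^i(\O_X)=0$, and then (\ref{eq:exacter}) forces $H^i_{et}(X,k)=0$. For $i=0$, the identifications $H^0(\O_X)=\K$ and $H^0_{et}(X,k)=k$ follow from $X$ being smooth, projective, and geometrically connected, and the resulting short exact sequence $0\to k\to \K\arrow{F-1}{}\K\to 0$ is simply the classical Artin--Schreier sequence for the extension $\K/k$.

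The remaining claims concern $i=n$. Setting $V=H^n(\O_X)$, Lemma \ref{lem:Milne} applied to the semilinear map $F$ on $V$ directly supplies the decomposition $V=V_{F,s}\oplus V_{F,\eta}$. The short exact sequence (\ref{eq:exacter}) identifies $H^n_{et}(X,k)=\ker(F-1)=V^F$. Since $F$ is nilpotent on $V_{F,\eta}$, the operator $F-1=-(1-F)$ is invertible on that summand, so $V_{F,\eta}^F=0$ and hence $V^F=V_{F,s}^F$. Lemma \ref{lem:Milne} furnishes a $\K$-basis $\{e_1,\ldots,e_t\}$ of $V_{F,s}$ that is simultaneously a $k$-basis of $V_{F,s}^F=H^n_{et}(X,k)$; extending scalars then yields $\K\otimes_k H^n_{et}(X,k)=V_{F,s}=H^n(\O_X)_{F,s}$, as required.

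I do not anticipate a serious obstacle: the whole argument is bookkeeping in the long exact sequence once Lemma \ref{lem:Milne} delivers the surjectivity of $F-1$ in each degree. The one point to verify carefully is the semilinearity, with respect to the Frobenius of $\K$, of the map induced by $F:\mathbb{G}_a\to\mathbb{G}_a$ on coherent cohomology; this is built into the definition of $F$ on $\mathbb{G}_a$ and the compatibility of the \'etale and Zariski computations of $H^\bullet(X,\mathbb{G}_a)$.
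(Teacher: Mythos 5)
Your argument follows the paper's proof essentially step for step: semilinearity of $F$ on $H^i_{et}(X,\mathbb{G}_a)=H^i(\O_X)$, surjectivity of $F-1$ via Lemma \ref{lem:Milne} to cut the long exact sequence into the short exact sequences (\ref{eq:exacter}), and the decomposition of $H^n(\O_X)$ from Lemma \ref{lem:Milne} together with the fixed-point basis to get $H^n(\O_X)_{F,s}=\K\otimes_k H^n_{et}(X,k)$. The one point you leave unaddressed is why (\ref{eq:dirsum}) is an isomorphism of $\K G$-modules rather than merely of $\K$-vector spaces: the paper notes that $H^n(\O_X)_{F,s}=\cap_{m\ge 1}F^m(H^n(\O_X))$ and $H^n(\O_X)_{F,\eta}=\mathrm{Kernel}(F^m)$ for $m\gg 0$ are intrinsically characterized, hence $G$-stable because $F$ commutes with the $G$-action; you should add this line.
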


\begin{proof}
The action of $F$ on $H^i_{et}(X,\mathbb{G}_a) = H^i(\O_X)$ is semilinear for
all $i$.  The split exact sequences (\ref{eq:exacter}) arise from the fact that by Lemma \ref{lem:Milne},
$$F-1: H^i(\O_X) \to H^i(\O_X)$$ is surjective for all $i$.  When $i = n$,
the decomposition in (\ref{eq:dirsum}) is a $\K G$-module decomposition because
$F$ commutes with the action of $G$, $$H^n(\O_X)_{F,s} = \cap_{m \ge 1} F^m(H^n(\O_X))$$ and $$H^n(\O_X)_{F,\eta} = \mathrm{Kernel}(F^m:H^n(\O_X) \to H^n(\O_X))\quad \mathrm{if}\quad m >> 0.$$  The sequence (\ref{eq:exacter}) shows 
$H^n(\O_X)^F = H^n_{et}(X,k)$ so $H^n(\O_X)_{F,s} = \K \otimes_k H^n_{et}(X,k)$
by Lemma \ref{lem:Milne}
\end{proof}

\begin{lemma}
\label{lem:hypocon}
 The complex $H^\bullet_{et}(X,k)$ is perfect,
 and $H^j_{et}(X,k) \ne 0$ if and only if $j \in \{0,n\}$.  
 The sequence (\ref{eq:AS}) gives   rise to a morphism  
\begin{equation}
\label{eq:into}
H^\bullet_{et}(X,k) \to H^\bullet_{et}(X,\mathbb{G}_a) = H^\bullet(\O_X)
\end{equation}
 in the derived category of complexes of $k [G]$-modules.  Let $H^\bullet(\O_X)'$
be the complex resulting from  $H^\bullet(\O_X)$ by truncating $H^\bullet(\O_X)$ in dimensions greater than $n$ and by replacing $H^n(\O_X)$ by the submodule
 $H^n(\O_X)_{F,s}$ appearing in (\ref{eq:dirsum}).  The morphism
 (\ref{eq:into}) gives a quasi-isomorphism
 \begin{equation}
 \label{eq:zipper}
 \K \otimes_{L,k} H^\bullet_{et}(X,k) = H^\bullet(\O_X)'
 \end{equation}
 of perfect complexes of $\K [G]$-modules, where $L$ on the left is the left derived tensor product.
 The $\K [G]$-module $H^n(\O_X)_{F,\eta}$ in (\ref{eq:dirsum}) is projective.
 \end{lemma}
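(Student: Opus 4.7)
The plan is to chase what the Artin--Schreier sequence (\ref{eq:AS}) yields in the derived category and then extract the rest from the coherent picture built in Lemma \ref{lem:coherent} together with the decomposition (\ref{eq:dirsum}). First, applying the hypercohomology functor to (\ref{eq:AS}) and using the derived-category isomorphism $H^\bullet_{et}(X,\mathbb{G}_a) \simeq H^\bullet(\O_X)$ recalled just before the lemma, one obtains a distinguished triangle
$$H^\bullet_{et}(X,k) \to H^\bullet(\O_X) \xrightarrow{F-1} H^\bullet(\O_X) \to H^\bullet_{et}(X,k)[1]$$
in $D(k[G])$ whose first arrow is precisely (\ref{eq:into}). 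The associated long exact sequence, combined with surjectivity of $F-1$ via Lemma \ref{lem:Milne}, splits into the short exact sequences (\ref{eq:exacter}); in particular $H^i_{et}(X,k) = H^i(\O_X)^F$ vanishes for $i \notin \{0,n\}$.

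For perfectness of $H^\bullet_{et}(X,k)$ over $k[G]$, I would start from the perfect representative $P^\bullet \in D(\K[G])$ of $H^\bullet(\O_X)$ supplied by Lemma \ref{lem:coherent} and transport the semilinear Frobenius action on $H^\bullet(\O_X)$ up to $P^\bullet$ at the cochain level. By Lang/Galois descent, a finitely generated projective $\K[G]$-module equipped with a compatible semilinear $F$-action is the scalar extension of a finitely generated projective $k[G]$-module; applying this termwise, and noting that $F-1$ is surjective at each cochain level (again via Lemma \ref{lem:Milne}), the fixed subcomplex $(P^\bullet)^{F=1}$ is a perfect $k[G]$-complex computing $H^\bullet_{et}(X,k)$.

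For the quasi-isomorphism (\ref{eq:zipper}), I would base-change the morphism (\ref{eq:into}) along $k \to \K$. Flatness of $\K/k$ together with perfectness from the previous step identifies $\K \otimes_{L,k} H^\bullet_{et}(X,k)$ with $\K \otimes_k H^\bullet_{et}(X,k)$, whose $i$-th cohomology equals $\K \otimes_k H^i(\O_X)^F$; by (\ref{eq:dirsum}) this is $\K$ for $i = 0$, is $H^n(\O_X)_{F,s}$ for $i = n$, and is zero otherwise. Hence the base-changed map factors through $H^\bullet(\O_X)'$ and is an isomorphism on every cohomology group, so it is a quasi-isomorphism. To obtain projectivity of $H^n(\O_X)_{F,\eta}$, I would realize $H^\bullet(\O_X)'$ as a subcomplex of $H^\bullet(\O_X)$ via the inclusion $H^n(\O_X)_{F,s} \hookrightarrow H^n(\O_X)$ coming from (\ref{eq:dirsum}) and take the cone; this cone has cohomology concentrated in degree $n$ equal to the quotient $H^n(\O_X)_{F,\eta}$, so it is quasi-isomorphic to $H^n(\O_X)_{F,\eta}[-n]$ in $D(\K[G])$. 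Since $H^\bullet(\O_X)$ is perfect by Lemma \ref{lem:coherent} and $H^\bullet(\O_X)'$ is perfect as the scalar extension along $k \to \K$ of a perfect $k[G]$-complex, the cone is a perfect complex concentrated in a single degree, forcing $H^n(\O_X)_{F,\eta}$ to be a finitely generated projective $\K[G]$-module.

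The main obstacle I anticipate is the perfectness step: Lemma \ref{lem:coherent} provides perfectness only over $\K[G]$, whereas (\ref{eq:zipper}) requires $H^\bullet_{et}(X,k)$ to be perfect over $k[G]$, and bridging the two demands the Lang/Galois descent input made available by the tameness hypothesis. The remaining three assertions are then formal consequences of this together with Lemma \ref{lem:splitting}.
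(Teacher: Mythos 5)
Your handling of the triangle coming from (\ref{eq:AS}), the identification of the cohomology of $\K\otimes_{L,k}H^\bullet_{et}(X,k)$ via Lemma \ref{lem:splitting}, and the ``difference of two perfect complexes'' argument for the projectivity of $H^n(\O_X)_{F,\eta}$ all match the paper and are correct. The problem is the step you yourself flag as the main obstacle: perfectness of $H^\bullet_{et}(X,k)$ over $k[G]$. Your proposed termwise Lang/Galois descent cannot work. Semilinear descent of a finitely generated projective $\K[G]$-module $P_i$ requires the semilinear endomorphism $F$ to be \emph{bijective} on $P_i$ (so that $P_i=\K\otimes_k P_i^{F}$ by Lemma \ref{lem:Milne}); but if $F$ were bijective on every term of a representative complex commuting with the differentials, it would be bijective on all cohomology, contradicting the fact that $F$ is nilpotent on the (in general nonzero) summand $H^n(\O_X)_{F,\eta}$ of (\ref{eq:dirsum}). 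For a general semilinear $F$ on $P_i$ one only gets $\K\otimes_k P_i^{F}=P_{i,s}\subsetneq P_i$, and $P_{i,s}$ is merely a $G$-stable subspace of a projective module, with no reason to be projective over $\K[G]$ (submodules of projectives are not projective when $p\mid \#G$). So while your fixed-point complex $(P^\bullet)^{F=1}$ does compute $H^\bullet_{et}(X,k)$ (since $F-1$ is surjective termwise by Lemma \ref{lem:Milne}), its terms are not projective $k[G]$-modules and perfectness does not follow.

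The paper gets perfectness by a completely different and more direct route: since $X\to Y$ is a tame $G$-cover, the sheaf $\pi_*k$ on $Y_{et}$ has stalks that are projective $k[G]$-modules, and the finiteness argument of \cite[Theorem VI.13.11]{Milne} applied to $H^\bullet_{et}(Y,\pi_*k)=H^\bullet_{et}(X,k)$ yields a perfect complex of $k[G]$-modules. You should replace your descent step with this argument (or some other genuine use of tameness on the \'etale side); note that this is also logically cleaner, since the paper then \emph{deduces} the projectivity of $H^n(\O_X)_{F,\eta}$ from perfectness of $H^\bullet_{et}(X,k)$, rather than needing any control of the Frobenius at the cochain level.
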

 
 \begin{proof}
  Since $X \to Y$ is a tame $G$-cover, the sheaf $\pi_* k$ in the \'etale topology on $Y$
  is a sheaf of projective $k[G]$-modules.  The argument of \cite[Theorem VI.13.11]{Milne} now shows that $H^\bullet_{et}(X,k)$ is a perfect complex of $k[G]$-modules.   The isomorphism (\ref{eq:zipper})
  in the derived category results form the calculation of the cohomology 
  groups $H^i_{et}(X,k)$ in Lemma \ref{lem:splitting}, where the left derived
  tensor product $\K \otimes_{L,k}$ is just the tensor product because all $k$-modules
  are free.   This implies $H^\bullet(\O_X)'$ is perfect because $H^\bullet_{et}(X,k)$
  is.  Because $H^\bullet(\O_X)$ is also perfect, we conclude that 
  $H^n(\O_X)_{F,\eta}$ must be projective because
 this was the module truncated from $H^\bullet(\O_X)$ in degree $n$ in
 the construction of $H^\bullet(\O_X)'$.  
 \end{proof}
 \vskip 0.1 truecm

 It follows from Lemma 2.8 that $H^{\bullet}_{et}(X,k)$ is a perfect complex such that  $H^i_{et}(X, k) \ne \{0\}$
if and only if $i \in \{0,n\}$. Following the lines of Lemmas  2.2 and 2.4 we conclude that we can attach to $H^\bullet_{et}(X,k)$ an extension class $\gamma(X, G)$ in 
$\mathrm{Ext}^{n+1}_{k [G]}(H^n_{et}(X,k),H^0_{et}(X,k))$ and prove that this $k$-vector space is of dimension $1$.

 \begin{thm}
 \label{thm:extcomps}  
   The morphism
 (\ref{eq:into}) leads to an isomorphism of one-dimensional $\K$ vector spaces 
 \begin{equation}
 \label{eq:leadto}
 \K \otimes_k \mathrm{Ext}^{n+1}_{k [G]}(H^n_{et}(X,k),H^0_{et}(X,k)) = 
 \mathrm{Ext}^{n+1}_{\K[ G]}(H^n(\O_X),H^0(\O_X))
 \end{equation}
 such that 
 \begin{equation}
 \label{eq:tenisom}
 \beta(X,G) = 1 \otimes \gamma(X,G) .
 \end{equation}
  The action of $F$ on $H^n(\O_X)$
 and on $H^0(\O_X) = \K$ leads to an anti-semilinear action of $F^{-1}$ on 
 $\mathrm{Ext}^{n+1}_{\K [G]}(H^n(\O_X),H^0(\O_X))$. 
  Via (\ref{eq:leadto}), the one-dimensional $k$-vector space
$L_1 = 1 \otimes_k \mathrm{Ext}^{n+1}_{k [G]}(H^n_{et}(X,k),H^0_{et}(X,k))$ is the subspace
of $\mathrm{Ext}^{n+1}_{\K G}(H^n(\O_X),H^0(\O_X))$ which is fixed by $F^{-1}$.
In particular, $\beta(X,G)$ is fixed by $F^{-1}$.
 \end{thm}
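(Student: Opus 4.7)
The strategy is to leverage the quasi-isomorphism $\K\otimes_{L,k} H^\bullet_{et}(X,k)\simeq H^\bullet(\O_X)'$ from Lemma \ref{lem:hypocon}. Since $H^\bullet_{et}(X,k)$ is a perfect complex of $k[G]$-modules with cohomology concentrated in degrees $0$ and $n$, the arguments of Lemmas \ref{lem:coherent} and \ref{lem:extcomp} apply verbatim with $\ell=k$, producing the one-dimensional $k$-vector space $\Ext^{n+1}_{k[G]}(H^n_{et}(X,k),H^0_{et}(X,k))$ and its generator $\gamma(X,G)$. Extending scalars termwise from $k$ to $\K$ in the resolution defining $\gamma(X,G)$ yields, thanks to the identifications $H^0(\O_X)=\K$ and $H^n(\O_X)_{F,s}=\K\otimes_k H^n_{et}(X,k)$ from Lemma \ref{lem:splitting}, a resolution whose associated Yoneda class is $1\otimes\gamma(X,G)$ in $\Ext^{n+1}_{\K[G]}(H^n(\O_X)_{F,s},\K)$. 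Both sides of (\ref{eq:leadto}) are one-dimensional over $\K$, and $1\otimes\gamma(X,G)$ is non-zero because the extension it classifies is non-split ($\K$ is not $\K[G]$-projective since $p\mid\#G$); thus the base-change map is an isomorphism of $\K$-lines.

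To obtain $\beta(X,G)=1\otimes\gamma(X,G)$, I would identify the class of $H^\bullet(\O_X)'$ with the image of $\beta(X,G)$ under the restriction map
\begin{equation*}
\Ext^{n+1}_{\K[G]}(H^n(\O_X),\K)\longrightarrow \Ext^{n+1}_{\K[G]}(H^n(\O_X)_{F,s},\K)
\end{equation*}
induced by the inclusion $H^n(\O_X)_{F,s}\hookrightarrow H^n(\O_X)$ of (\ref{eq:dirsum}). The complementary summand $H^n(\O_X)_{F,\eta}$ is $\K[G]$-projective by Lemma \ref{lem:hypocon}, so $\Ext^{n+1}_{\K[G]}(H^n(\O_X)_{F,\eta},\K)=0$ and the restriction map is an isomorphism. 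Concretely, the resolution $P^\bullet$ defining $\beta(X,G)$ can be modified in degree $n$ by splitting off the projective summand $H^n(\O_X)_{F,\eta}$, producing a representative of $H^\bullet(\O_X)'$ whose Yoneda class is precisely the image of $\beta(X,G)$. I expect this verification to be the main technical obstacle, since one must check compatibility of the two constructions at the level of explicit Yoneda extensions, relying essentially on the projectivity of the nilpotent summand.

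For the remaining assertions, I would apply Corollary \ref{cor:action} with $M=H^n(\O_X)$ and $T=F$ to obtain an anti-semilinear action of $F^{-1}$ on $\Ext^{n+1}_{\K[G]}(H^n(\O_X),\K)$; under (\ref{eq:degenit}) this arises from the action on $\Hom_\K(H^n(\O_X),\K)$, which via (\ref{eq:dirsum}) and the projectivity of $H^n(\O_X)_{F,\eta}$ reduces to the action on $\Hom_\K(H^n(\O_X)_{F,s},\K)$. Since $H^n(\O_X)_{F,s}=\K\otimes_k H^n_{et}(X,k)$, the $F^{-1}$-fixed $k$-subspace of this $\Hom$-space is canonically $\Hom_k(H^n_{et}(X,k),k)$, so functoriality of group cohomology shows that $L_1$ is fixed by $F^{-1}$. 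As $L_1$ is a one-dimensional $k$-subspace of a one-dimensional $\K$-vector space and the $F^{-1}$-fixed subspace is at most one-dimensional over $k$ (by the anti-semilinear analogue of Lemma \ref{lem:Milne}, using that $F^{-1}$ is non-zero since $F$ is bijective on $H^n(\O_X)_{F,s}$), equality follows. The final claim that $\beta(X,G)$ is fixed by $F^{-1}$ is then immediate from (\ref{eq:tenisom}).
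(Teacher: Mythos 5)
Your proposal is correct and follows essentially the same route as the paper: it reduces $\beta(X,G)$ to the class $\beta(X,G)'$ of $H^\bullet(\O_X)'$ via the projectivity of $H^n(\O_X)_{F,\eta}$, identifies $\beta(X,G)'$ with $1\otimes\gamma(X,G)$ through the base-change quasi-isomorphism of Lemma \ref{lem:hypocon}, and deduces the $F^{-1}$-statements from the semilinearity of $F$ and the fact that it fixes $H^n_{et}(X,k)$ and $k\subset\K$. The paper's own proof is terser at exactly the points you flag as delicate (the Yoneda-level compatibility and the non-vanishing of $1\otimes\gamma(X,G)$), so your added detail is consistent with, rather than divergent from, its argument.
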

 
 \begin{proof}
 Since
$H^n(\O_X)_{F,\eta}$ is a  projective $\K [G]$-module by Lemma \ref{lem:hypocon},  inclusion $H^n(\O_X)_{F,s} \to H^n(\O_X)$  induces an isomorphism of one-dimensional $\K$-vector spaces
\begin{equation}
\label{eq:firstisom}
\mathrm{Ext}^{n+1}_{\K [G]}(H^n(\O_X),H^0(\O_X)) \to 
\mathrm{Ext}^{n+1}_{\K [G]}(H^n(\O_X)_{F,s},H^0(\O_X)) 
\end{equation}
which sends the extension class $\beta(X,G)$ associated
to $H^\bullet(\O_X)$ to the extension class $\beta(X,G)'$
associated to $H^\bullet(\O_X)'$.  In view of Lemma \ref{lem:hypocon}, the isomorphism
$$\K \otimes_{L,k} H^\bullet_{et}(X,k) \cong H^\bullet(\O_X)'$$ in the derived
category gives an isomorphism
 \begin{equation}
 \label{eq:leadinto}
 \K \otimes_k \mathrm{Ext}^{n+1}_{k [G]}(H^n_{et}(X,k),H^0_{et}(X,k)) = 
 \mathrm{Ext}^{n+1}_{\K [G]}(H^n(\O_X)_{F,s},H^0(\O_X))
 \end{equation}
 of one-dimensional vector spaces over $\K$ which identities $1 \otimes \gamma(X,G)$ with $\beta(X,G)'$
 when $\gamma(X,G)$ is the extension class in 
 $\mathrm{Ext}^{n+1}_{k [G]}(H^n_{et}(X,k),H^0_{et}(X,k))$ associated
 to $H^\bullet_{et}(X,k)$.   Combining
 (\ref{eq:firstisom}) and (\ref{eq:leadinto}) thus leads to an isomorphism
 (\ref{eq:leadto}) which identifies $\beta(X,G)$ with $1 \otimes \gamma(X,G)$. 
 
 The action of $F$ on $H^n(\O_X)$ is via the action of $F$ on $\O_X$
 and commutes with the action of $G$.  (This $F$ is different from the
 $\K$-linear relative Frobenius automorphism $F_{X/\K}$ of $H^n(\O_X) = H^n_{et}(X,\mathbb{G}_a)$ described by Milne in \cite[\S VI.13]{Milne}.)  
 Since $F$ acts semilinearly and fixes both $H^n_{et}(X,k) \subset H^n(\O_X)$
 and $H^0_{et}(X,k) = k \subset \K = H^0(\O_X)$,
 the remaining assertions in Theorem  \ref{thm:extcomps} follow from
 (\ref{eq:leadto}).
 \end{proof}

\section{Extension class invariants arising from models}
\label{s:next}
 In this section we will assume the   following  strengthening of Hypothesis 2.1. .
\begin{Hypothesis}
\label{hyp:dimension2}
The $p$-Sylow subgroups of the group $G$ are cyclic and non trivial and the $\K$-vector spaces $H^n(G, \bar k)$ and $ H^{n+1}(G, \bar k)$ are of dimension one. The 
variety $X$ is of dimension $n$ and  $H^i(\O_X) = \{0\}$
if and only if $i \not \in \{0,n\}$.  There exists 
 a smooth projective variety $Y_0$ over $k$ for which the following is true.
\begin{enumerate}
\item[a.] $Y = X/G = \K \otimes_k Y_0$.
\item[b.] The morphism 
$\tilde \pi:X \to Y_0$ which is the composition of $\pi:X \to Y$ with the projection
$Y \to Y_0$ is Galois.  
\end{enumerate}
\end{Hypothesis}
We fix once for all a $p$-Sylow subgroup $C$ of $G$. Since $\K$ is of characteristic $p$,  for any integer $m$, the restriction map induces an injection
\begin{equation}
\label{eq:resmap}
\mbox{Res}^G_C: H^m(G, \bar k)\mapsto H^m(C, \bar k).
\end{equation}
Since we have assumed $C$ to be cyclic and non-trivial,  the $\bar k$-vector spaces $H^m(C, \bar k)$ are of dimension $1$ for all $m$. Therefore, 
Hypothesis \ref{hyp:dimension2} requires that (\ref{eq:resmap}) is
an isomorphism for $m \in \{n,n+1\}$.

\begin{Example}
\label{ex:semid}
Suppose that $G$ is the semi-direct product of a normal subgroup $H$
of order prime to $p$ with a non-trivial cyclic $p$-group $C$.  Then  the groups $H^i(H, \bar k)$ are trivial for  $i\geq 0$. Therefore the inflation
 homomorphisms
 $$\mbox{Inf}: H^i(G/H, \bar k)\rightarrow H^i(G, \bar k)$$
 are  isomorphisms  and
 $$H^i(C, \bar k)\simeq H^i(G, \bar k),\  \mbox{for}\ i\geq 0 . $$
 We conclude that, in this case,  the dimensions of $H^n(G, \bar k)$ and $H^{n+1}(G, \bar k)$  are both  equal to one as required in Hypothesis \ref{hyp:dimension2}.
 \end{Example}
 
 The aim of this section is to show  that the model $Y_0$ for $Y$ over $k$ leads  to a class
$$\alpha(X,G) \in \mathrm{Ext}^{n+1}_{\K [C]}(H^n(\O_X),H^0(\O_X))$$
which is well defined up to multiplication by an element of $k^*$, and which is different in general from the class obtained by restriction from the class $\beta(X,G)$ constructed
in the previous section.  This new class should be understood as an obstruction to a descent problem,  and to be more precise,   the  descent of  the action $X\times G\longrightarrow X$ defined over $\K$ to an action $X_0\times G\rightarrow X_0$  defined over $k$. The key to constructing $\alpha(X,G)$
is given by the following three results.

\begin{prop}
\label{prop:tryitall}Let $\Gamma = \mathrm{Gal}(X/Y_0)$.  The 
morphism of sheaves on $Y_0$ given by $\O_{Y_0} \to (\tilde \pi)_* \O_X$
leads to a homomorphism 
$$H^n(Y_0,\O_{Y_0}) \to H^n(Y_0,(\tilde \pi)_* \O_X) = H^n(\O_X)$$
and an exact sequence
\begin{equation}
\label{eq:zerogood0}
0 \arrow{}{} W \arrow{}{} H^n(Y_0,\O_{Y_0}) \arrow{}{}  H^n(\O_X)^\Gamma \arrow{}{} W' \arrow{}{} 0
\end{equation}
in which $W$ and $W'$ are $k$ vector spaces of dimension $1$ with a trivial
action of $G$.  Tensoring $\O_{Y_0} \to (\tilde \pi)_* \O_X$ with $\K$ over $k$
gives the natural homomorphism $\O_Y \to \pi_* \O_X$ of sheaves on $Y$.  Tensoring (\ref{eq:zerogood0}) with $\K$ over $k$ gives the exact sequence
\begin{equation}
\label{eq:zerogood1}
0 \arrow{}{} \K \otimes_k W \arrow{}{} H^n(Y,\O_{Y}) \arrow{}{}  H^n(\O_X)^G \arrow{}{} \K \otimes_k W' \arrow{}{} 0.
\end{equation}
associated to the homomorphism $H^n(Y,\O_{Y}) \to H^n(Y,\pi_* \O_X) = H^n(\O_X)$
which results from $\O_Y \to \pi_* \O_X$.
\end{prop}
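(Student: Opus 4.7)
First, Leray and the vanishing of $R^{i}\tilde\pi_{*}\O_X$ for $i>0$ (the finite map $\pi$ has no higher direct images of quasi-coherent sheaves, and the base change $Y\to Y_0$ is affine) identify $H^n(Y_0,\tilde\pi_{*}\O_X)$ with $H^n(\O_X)$. The morphism $\O_{Y_0}\to \tilde\pi_{*}\O_X$ then produces the map $H^n(Y_0,\O_{Y_0}) \to H^n(\O_X)$ of the statement.

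To produce the exact sequence I would apply the Cartan--Leray (Hochschild--Serre) spectral sequence for the Galois cover $\tilde\pi$ with group $\Gamma$:
\[
E_2^{p,q} \;=\; H^p\bigl(\Gamma,\,H^q(\O_X)\bigr) \;\Longrightarrow\; H^{p+q}(Y_0,\O_{Y_0}).
\]
By Hypothesis~\ref{hyp:dimension2}, $H^q(\O_X)=0$ for $q\notin\{0,n\}$, so only the rows $q=0,n$ are nonzero. For $2\le r\le n$ the differential $d_r$ has either source or target in a zero row, and $d_r=0$ for $r\ge n+2$ by the same shape reason; hence the only possibly nonzero differential is $d_{n+1}\colon E_{n+1}^{p,n}\to E_{n+1}^{p+n+1,0}$. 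Reading off the filtration on $H^n(Y_0,\O_{Y_0})$ gives
\[
0 \to H^n(\Gamma,\K) \to H^n(Y_0,\O_{Y_0}) \to H^n(\O_X)^{\Gamma} \xrightarrow{d_{n+1}} H^{n+1}(\Gamma,\K),
\]
and the middle map is precisely the edge map induced by $\O_{Y_0}\to\tilde\pi_{*}\O_X$ of the first paragraph. Setting $W := H^n(\Gamma,\K)$ and $W' := \mathrm{image}(d_{n+1})$ yields (\ref{eq:zerogood0}).

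For the dimension and $G$-action assertions, I would apply the Lyndon--Hochschild--Serre spectral sequence to $1\to G\to \Gamma\to\Delta\to 1$. The quotient $\Delta$ acts on $\K$ through Frobenius automorphisms; since $\K$ has a normal basis over $k$, one has $H^a(\Delta,\K)=0$ for $a>0$ and $\K^{\Delta}=k$. Because $G$ acts trivially on $\K$, flat base change gives $H^b(G,\K) = H^b(G,k)\otimes_k\K$ with trivial $\Delta$-action on $H^b(G,k)$, so
\[
H^m(\Gamma,\K) \;=\; H^0\bigl(\Delta,\,H^m(G,\K)\bigr) \;=\; H^m(G,k),
\]
which Hypothesis~\ref{hyp:dimension2} makes $1$-dimensional over $k$ for $m\in\{n,n+1\}$. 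In particular $W$ is $1$-dimensional. For $W'$: the hypothesis that $\dim Y_0=n$ forces $H^{n+1}(Y_0,\O_{Y_0})=0$, so the subquotient $E_\infty^{n+1,0} = \mathrm{coker}(d_{n+1})$ of $H^{n+1}(Y_0,\O_{Y_0})$ vanishes, $d_{n+1}$ is surjective, and $W' = H^{n+1}(\Gamma,\K)$ is also $1$-dimensional over $k$. Triviality of the $G$-action on $W$ is immediate because $W\subset H^n(Y_0,\O_{Y_0})$ on which $G$ acts trivially; on $W'$ it follows because $W'$ is a quotient of $H^n(\O_X)^{\Gamma}\subset H^n(\O_X)^G$.

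Finally, (\ref{eq:zerogood1}) follows from (\ref{eq:zerogood0}) by tensoring with $\K$ over $k$, exactness being preserved by flatness. Three identifications are needed: flat base change gives $H^n(Y_0,\O_{Y_0})\otimes_k\K = H^n(Y,\O_Y)$; Galois descent (Hilbert 90 for the semi-linear continuous $\mathrm{Gal}(\K/k)$-action on the finite-dimensional $\K$-vector space $H^n(\O_X)^G$) gives $\K\otimes_k H^n(\O_X)^{\Gamma} = H^n(\O_X)^G$; and one has $W\otimes_k\K = H^n(G,\K)$, $W'\otimes_k\K = H^{n+1}(G,\K)$. The resulting sequence is what one gets by applying the Cartan--Leray argument directly to $\pi\colon X\to Y$ with group $G$, and is attached as claimed to $\O_Y\to\pi_{*}\O_X$. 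The main technical care needed is in handling the Cartan--Leray and descent arguments for $\Gamma$ when it is a profinite group; once the standard vanishings $H^{>0}(\Delta,\K)=0$ and Hilbert 90 for $\mathrm{Gal}(\K/k)$ are in hand, everything else is spectral-sequence bookkeeping.
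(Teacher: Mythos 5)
Your argument is correct in outline, but it takes a genuinely different route from the paper's. The paper never invokes the Cartan--Leray spectral sequence for the profinite cover $X\to Y_0$: it first works over $Y$ with the finite group $G$, defining $L$ and $L'$ as the kernel and cokernel of $\iota_*:H^n(\O_Y)\to H^n(\O_X)^G$, and identifies $L\cong H^n(G,\K)$, $L'\cong H^{n+1}(G,\K)$ by hand --- via the trace isomorphism $H^n(\O_X)_G\stackrel{\sim}{\to}H^n(\O_Y)$ (Lemma 4.4, which uses $\dim Y=n$ to kill $H^{n+1}$ of the relevant sheaves), a snake-lemma chase on the perfect complex $P^\bullet$ of Lemma \ref{lem:coherent}, and the observation that $P^\bullet$ extends to an injective resolution of $\K$ so that the resulting subquotients compute $\mathrm{Ext}^\bullet_{\K[G]}(\K,\K)=H^\bullet(G,\K)$. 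Only afterwards does it descend to $Y_0$, using the splitting $\Gamma=G\rtimes\overline{\langle\phi_X\rangle}$ and Lang's theorem (Lemma \ref{lem:Milne}) to show the sequence over $Y$ is the base change of one over $Y_0$. Your proof collapses these two steps into one spectral sequence over $Y_0$; morally the paper's snake-lemma argument is a hands-on version of your two-row degeneration for the finite cover $X\to Y$, so the approaches buy essentially the same thing, with yours trading an explicit diagram chase for the burden of justifying cohomological descent with quasi-coherent coefficients along a pro-\'etale torsor and continuous cohomology of $\Gamma$ (which you acknowledge but do not carry out). Two small points to repair: your assertion that $\Delta$ acts trivially on $H^b(G,k)$ is unjustified, since $\Delta$ may act on $G$ by a nontrivial outer action through conjugation in $\Gamma$; what you actually need, and what is true, is only that the $\Delta$-action on $H^b(G,\K)$ is semilinear, so that Galois descent still gives a one-dimensional $k$-space of invariants. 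Also, the compatibility of your $\Gamma$-spectral sequence with the $G$-spectral sequence over $Y$ after tensoring with $\K$ (your last sentence) is asserted rather than proved; it is a standard functoriality, but it is exactly the content of the paper's Lemma 4.2 and deserves a word.
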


\begin {prop}
\label{prop:propnodd} Suppose that $n$ is odd.   Then the trace map $Tr_*: H^n(\mathcal{O}_X)\rightarrow H^n(\mathcal{O}_Y)$  and the
inclusion $ \K\otimes_k W \rightarrow H^{n}(\mathcal{O}_{Y})$ induce $\bar k$-linear maps
$$Ext_{\bar k[C]} ^{n+1}(H^{n}(\mathcal{O}_{Y}), \bar k)\rightarrow Ext_{\bar k[C]} ^{n+1}(H^{n}(\mathcal{O}_{X}), \bar k)\ $$
and
$$Ext_{\bar k[C]} ^{n+1}(H^{n}(\mathcal{O}_{Y}), \bar k)\rightarrow Ext_{\bar k[C]} ^{n+1}(\K\otimes_k W, \bar k) $$
respectively. These maps are both surjective with the same kernel, giving an isomorphism
\begin{equation}
\label{eq:noddcase} Ext_{\bar k[C]} ^{n+1}(H^{n}(\mathcal{O}_{X}), \bar k)\rightarrow Ext_{\bar k[C]} ^{n+1}(\K\otimes_k W, \bar k)=\K\otimes_k Ext_{ k[C]} ^{n+1}( W,  k) . \end{equation}
\end{prop}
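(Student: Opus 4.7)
The plan is to combine the four-term exact sequence of Proposition~\ref{prop:tryitall} with two key observations: first, since $Y = X/G$, the group $G$ (and hence $C$) acts trivially on $V := H^n(\mathcal{O}_Y)$; second, the endomorphism identity $\iota \circ Tr_* = N_G$ on $U := H^n(\mathcal{O}_X)$, where $\iota: V \to U$ is induced by $\mathcal{O}_Y \to \pi_*\mathcal{O}_X$ and $N_G := \sum_{g \in G} g$.  Writing $A := \bar k \otimes_k W$, Proposition~\ref{prop:tryitall} identifies $\ker(\iota) = A$.

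Since $V$ and $A$ both carry trivial $C$-action, the spectral-sequence degeneration underlying Lemma~\ref{lem:extcomp} (applied with $G$ replaced by $C$) gives
\[ \mathrm{Ext}^{n+1}_{\bar k[C]}(M, \bar k) \cong M^\vee \otimes_{\bar k} H^{n+1}(C, \bar k) \qquad \text{for } M \in \{V, A\}. \]
Under these identifications, $\phi_2$ is the dualization of the inclusion $A \hookrightarrow V$ tensored with $\mathrm{id}_{H^{n+1}(C, \bar k)}$, hence is surjective with kernel $(V/A)^\vee \otimes H^{n+1}(C, \bar k)$.

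For $\phi_1$ the parity of $n$ enters crucially: because $n$ is odd, $n+1$ is even, so by the $2$-periodicity of Tate cohomology for cyclic groups, cup product with a generator $\xi \in H^{n+1}(C, \bar k)$ identifies $\hat H^0(C, U^\vee) = (U^\vee)^C / N(U^\vee)$ with $\mathrm{Ext}^{n+1}_{\bar k[C]}(U, \bar k)$, which is $1$-dimensional by Lemma~\ref{lem:extcomp}.  Since $Tr_*$ is $G$-equivariant (with $V$ carrying the trivial $G$-action), its $\bar k$-dual $Tr^\vee: V^\vee \to U^\vee$ lands in $(U^\vee)^C$, and $\phi_1$ becomes the composite $V^\vee \to (U^\vee)^C \twoheadrightarrow (U^\vee)^C / N(U^\vee)$ tensored with $H^{n+1}(C, \bar k)$.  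The elementary identity $N(U^\vee) = \{f \in U^\vee : f|_{\ker N_G} = 0\}$ then yields
\[ \ker(\phi_1) = \bigl(V / Tr_*(\ker N_G)\bigr)^\vee \otimes H^{n+1}(C, \bar k), \]
while $\iota \circ Tr_* = N_G$ combined with $\ker(\iota) = A$ gives $\ker(N_G) = Tr_*^{-1}(A)$, and so $Tr_*(\ker N_G) = A \cap \mathrm{image}(Tr_*)$.

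The proposition therefore reduces to the single geometric claim $A \subset \mathrm{image}(Tr_*)$: this simultaneously forces $\ker(\phi_1) = (V/A)^\vee \otimes H^{n+1}(C, \bar k) = \ker(\phi_2)$ and, by the $1$-dimensionality of the target, the surjectivity of $\phi_1$.  For any $u \in U^G$ one has $N_G(u) = |G| \cdot u = 0$ in characteristic $p$, so $Tr_*(u) \in \ker(\iota) = A$; hence $Tr_*|_{U^G}$ factors through $U^G/\iota(V) = \bar k \otimes_k W'$, giving a map of $1$-dimensional $\bar k$-spaces $\bar k \otimes_k W' \to A$ whose non-vanishing is the required input.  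I would verify this by working with the Hochschild--Serre spectral sequence $H^p(G, H^q(\mathcal{O}_X)) \Rightarrow H^{p+q}(\mathcal{O}_Y)$: under Hypothesis~\ref{hyp:dimension2} only $(p,q) = (n,0), (0,n)$ contribute to $H^n(\mathcal{O}_Y)$, producing the filtration $0 \subset A \subset V$ with $A = H^n(G, \bar k)$, and the trace realises the corestriction at this top filtration piece.  The main obstacle is verifying this non-vanishing rigorously, for which the $1$-dimensionality of both $H^n(G, \bar k)$ and $H^{n+1}(G, \bar k)$ in Hypothesis~\ref{hyp:dimension2} is essential.
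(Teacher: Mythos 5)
Your overall architecture matches the paper's: identify $\mathrm{Ext}^{n+1}_{\bar k[C]}(-,\bar k)$ with $\hat H^{0}(C,\mathrm{Hom}_{\bar k}(-,\bar k))$ using that $C$ is cyclic and $n+1$ is even, and compute the two kernels as annihilators of subspaces of $V=H^n(\mathcal{O}_Y)$. However, the step you call an ``elementary identity,'' namely $N(U^\vee)=\{f\in U^\vee:\ f|_{\ker N_G}=0\}$, is false as written and hides the actual content of the proposition. The norm appearing in $\hat H^0(C,U^\vee)=(U^\vee)^C/N_C(U^\vee)$ is $N_C=\mathrm{Tr}_C$, and since $(N_C f)(m)=f(N_C m)$ the correct identity is $N_C(U^\vee)=\{f:\ f|_{\ker N_C}=0\}$; for $U=\bar k[G]$ with $C\neq G$ your right-hand side has dimension $1$ while $N_C(U^\vee)$ has dimension $[G:C]$. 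What you therefore actually obtain is $\ker(\phi_1)=\mathrm{Ann}_{V^\vee}(Tr_*(\ker N_C))$, whereas the geometric input $\iota\circ Tr_*=N_G$ only controls $Tr_*(\ker N_G)$. The missing bridge is the equality $Tr_*(\ker N_C)=Tr_*(\ker N_G)$, and proving it is exactly where the Sylow hypothesis enters: given $x$ with $N_G x=0$, one replaces $x$ by $[G:C]^{-1}\sum_{\tau\in S}\tau x$ for a set $S$ of coset representatives of $C$ in $G$, using that $[G:C]$ is prime to $p$ and that $Tr_*$ is $G$-invariant. This averaging computation is the heart of the paper's proof (its displayed equation (4.8)) and is absent from your proposal.

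The second gap is in what you flag as ``the main obstacle,'' the claim $A\subset\mathrm{image}(Tr_*)$. You propose to establish it by showing that an induced map $\bar k\otimes_k W'\to \bar k\otimes_k W$ is nonzero; that would prove the stronger statement $A\subset Tr_*(H^n(\mathcal{O}_X)^G)$, which is not needed and which I see no reason to believe in general. The claim you actually need is immediate: $Tr_*:H^n(\mathcal{O}_X)\to H^n(\mathcal{O}_Y)$ is surjective onto all of $V$, because the trace map $\pi_*\mathcal{O}_X\to\mathcal{O}_Y$ of sheaves is onto (tameness) and its kernel $\mathcal{L}$ satisfies $H^{n+1}(\mathcal{L})=0$ since $\dim Y=n$; this is the paper's Lemma 4.4. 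With these two repairs your argument becomes essentially the paper's. (A minor difference: you get the one-dimensionality of $\mathrm{Ext}^{n+1}_{\bar k[C]}(H^n(\mathcal{O}_X),\bar k)$ from Lemma \ref{lem:extcomp} restricted to $C$, while the paper derives it from the decomposition $H^n(\mathcal{O}_X)\cong\bar k\oplus(\hbox{free})$ of Lemma \ref{lem:module}; either route is fine. Your appeal to a Hochschild--Serre spectral sequence for coherent cohomology of the possibly non-\'etale cover $\pi$ would also need justification, but it is not needed once the above is fixed.)
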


\begin{prop}
\label{prop:propneven} Suppose $n$ is even.  
Then the  inclusion $H^n(\mathcal{O}_X)^G \to H^n(\mathcal{O}_X)$ and the surjection
$H^n(\mathcal{O}_X)^G \to \K\otimes_k W'$ coming from (3.3)   lead to $\bar k$-vector
space maps
$$ \mbox{Ext}^{n+1}_{\bar k [C]}(H^n(\mathcal{O}_X),H^0(\mathcal{O}_X)) \rightarrow  \mbox{Ext}^{n+1}_{\bar k [C]}(H^n(\mathcal{O}_X)^G,H^0(\mathcal{O}_X))$$ and
$$\mbox{Ext}^{n+1}_{\bar k [C]}( \K\otimes_k W',H^0(\mathcal{O}_X))  \rightarrow  \mbox{Ext}^{n+1}_{\bar k [C]}(H^n(\mathcal{O}_X)^G,H^0(\mathcal{O}_X))$$
respectively.
These maps are both injective with the same image, leading to an isomorphism
\begin{equation}
\label{eq:nevencase}\mbox{Ext}^{n+1}_{\bar k [C]}(H^n(\mathcal{O}_X),H^0(\mathcal{O}_X)) = \mbox{Ext}^{n+1}_{\bar k [C]}( \K\otimes_k W', \K)
=\K\otimes_k  \mbox{Ext}^{n+1}_{k [C]}( W', k)  . \end{equation}

\end{prop}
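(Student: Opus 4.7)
The plan is to leverage the triviality of the $C$-action on every module in (\ref{eq:zerogood1}) together with the one-dimensionality of both source extension groups. All four modules in (\ref{eq:zerogood1})---$\K\otimes_k W$, $H^n(Y,\O_Y)$, $H^n(\O_X)^G$, and $\K\otimes_k W'$---carry trivial $C$-action: $W$ and $W'$ by Proposition \ref{prop:tryitall}, $H^n(Y,\O_Y)$ because $G\supset C$ acts trivially on $Y=X/G$, and $H^n(\O_X)^G$ by $G$-invariance. Consequently any $\K$-linear section of an arrow in (\ref{eq:zerogood1}) is automatically $\K[C]$-linear, so splitting (\ref{eq:zerogood1}) at the image $E=\mathrm{im}(H^n(Y,\O_Y)\to H^n(\O_X)^G)$ yields $\K[C]$-module decompositions $H^n(Y,\O_Y)\cong(\K\otimes_k W)\oplus E$ and $H^n(\O_X)^G\cong E\oplus(\K\otimes_k W')$.

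Both source extension groups are one-dimensional over $\K$: for $\mathrm{Ext}^{n+1}_{\K[C]}(H^n(\O_X),\K)$, apply Lemma \ref{lem:extcomp} with $C$ replacing $G$, valid since the perfect complex of Lemma \ref{lem:coherent} restricts to a perfect $\K[C]$-complex and $p\mid|C|$; for $\mathrm{Ext}^{n+1}_{\K[C]}(\K\otimes_k W',\K)\cong H^{n+1}(C,\K)$, use Hypothesis \ref{hyp:dimension2} together with the injection (\ref{eq:resmap}) at $m=n+1$. Each of the two maps in the proposition therefore has one-dimensional source and is either zero or injective. Applying $\mathrm{Ext}^{n+1}_{\K[C]}(-,\K)$ to the split sequence $0\to E\to H^n(\O_X)^G\to\K\otimes_k W'\to 0$ yields a split decomposition
$$\mathrm{Ext}^{n+1}_{\K[C]}(H^n(\O_X)^G,\K)\;\cong\;\mathrm{Ext}^{n+1}_{\K[C]}(E,\K)\oplus\mathrm{Ext}^{n+1}_{\K[C]}(\K\otimes_k W',\K),$$
under which the surjection-induced map is the split inclusion onto the second summand; this gives its injectivity with image equal to this summand.

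To finish, I must show the inclusion-induced map $i^*$ has the same image. Since the second summand is one-dimensional, this requires (a) the containment $\mathrm{im}(i^*)\subseteq\mathrm{Ext}^{n+1}_{\K[C]}(\K\otimes_k W',\K)$---equivalently, the composition of $i^*$ with the projection onto $\mathrm{Ext}^{n+1}_{\K[C]}(E,\K)$ vanishes---and (b) $i^*\neq 0$. Using the split injection $\mathrm{Ext}^{n+1}_{\K[C]}(E,\K)\hookrightarrow\mathrm{Ext}^{n+1}_{\K[C]}(H^n(Y,\O_Y),\K)$ arising from the analogous splitting of $H^n(Y,\O_Y)$, (a) reduces to the vanishing of the pullback $f^*\beta(X,G)|_C$ along the natural map $f\colon H^n(Y,\O_Y)\to H^n(\O_X)$. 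For (b), I would analyze the Yoneda cup-product pairing of $\beta(X,G)|_C$ with a $\K[C]$-linear splitting $\K\otimes_k W'\hookrightarrow H^n(\O_X)^G\hookrightarrow H^n(\O_X)$ of a generator of $W'$, and show that it lands nonzero in $H^{n+1}(C,\K)$.

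The hard part is the vanishing $f^*\beta(X,G)|_C=0$ in step (a). I would establish this by representing $\beta(X,G)|_C$ via the perfect $\K[C]$-complex $P^\bullet$ from Lemma \ref{lem:coherent}, forming the pulled-back $(n+1)$-extension of $H^n(Y,\O_Y)$ by $\K$, and exhibiting a chain-level splitting using both the trivial $\K[C]$-module structure on $H^n(Y,\O_Y)$ and the factorization of $f$ through the $G$-invariants $H^n(\O_X)^G$. This extension-theoretic computation, where the existence of the model $Y_0$ over $k$ plays an essential role, is the key technical content of the proposition.
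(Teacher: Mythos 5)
Your structural reduction is sound and is in fact the skeleton of the paper's own argument: the trivial $C$-action on every term of (\ref{eq:zerogood1}) does give $\K[C]$-splittings, Lemma \ref{lem:extcomp} applied with $C$ in place of $G$ is legitimate (the $P_i$ stay projective over $\K[C]$ and $C$ is a nontrivial $p$-group), and the proposition does come down to your claims (a) and (b). The problem is that you prove neither: both are left as descriptions of what you ``would'' do, and they are precisely where the content of the proposition lies. Here is how the paper disposes of them. Since $C$ is cyclic and $n$ is even, periodicity gives $\mathrm{Ext}^{n+1}_{\K[C]}(M,\K)\cong \hat H^{-1}(C,\mathrm{Hom}_{\K}(M,\K))$, so classes are represented by functionals $g$ with $\mathrm{Tr}_C\, g=0$, i.e.\ functionals vanishing on $\mathrm{Tr}_C(M)$. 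Writing $\mathrm{Tr}_G=\mathrm{Tr}_C\cdot\sum_{\tau}\tau$ over coset representatives of $G/C$ shows that the image $E$ of $\iota_*$, which by Lemma 4.4 equals $\mathrm{Tr}_G(H^n(\O_X))$, is contained in $\mathrm{Tr}_C(H^n(\O_X))$; hence every representative of a class in $\mathrm{Ext}^{n+1}_{\K[C]}(H^n(\O_X),\K)$ dies on $E$, which is exactly your vanishing (a). For (b), an averaging argument using that $[G:C]$ is prime to $p$ gives $\mathrm{Tr}_G(H^n(\O_X))=H^n(\O_X)^G\cap \mathrm{Tr}_C(H^n(\O_X))$, so a nonzero functional on $H^n(\O_X)^G$ killing $\mathrm{Tr}_G(H^n(\O_X))$ extends to a functional on $H^n(\O_X)$ killing $\mathrm{Tr}_C(H^n(\O_X))$; its class maps to your generator, so the inclusion-induced map is nonzero. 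Neither step appears in your write-up.

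You also misdiagnose where the difficulty sits. The vanishing (a) is not ``the key technical content'' requiring a chain-level splitting of a pulled-back $(n+1)$-extension, and the model $Y_0$ plays no role in it whatsoever: $Y_0$ enters this proposition only through the identification $\K\otimes_k W'=L'=H^n(\O_X)^G/\mathrm{Tr}_G(H^n(\O_X))$ supplied by Proposition \ref{prop:tryitall}, while (a) and (b) are elementary trace computations in $\K[G]$-module theory once the $\hat H^{-1}$ description is in place. One genuine subtlety you skate over in the opposite direction: your appeal to Lemma \ref{lem:extcomp} is a clean way to get one-dimensionality of $\mathrm{Ext}^{n+1}_{\K[C]}(H^n(\O_X),\K)$, but to make the maps explicit enough to verify (a) and (b) the paper needs the $\K[C]$-module structure $H^n(\O_X)\cong M(n)\oplus(\mathrm{free})$ with $M(n)=\K[C]/(\K\cdot\mathrm{Tr}_C)$ from Lemma \ref{lem:module} --- note that for $n$ even $H^n(\O_X)$ has no trivial summand, which is why the $\hat H^{-1}$ (rather than $\hat H^{0}$) description is the right one here and why the even case genuinely differs from Proposition \ref{prop:propnodd}.
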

We will give the proof of these Propositions in the next section.  

Our aim is now to use  these  propositions to construct $\alpha(X,G)$ and a numerical
invariant $\mu(X,G)$. The restriction map induces an injective homomorphism of $\K$-vector spaces 
$$\mathrm{Ext}^{n+1}_{\K [G]}(H^n(\O_X),H^0(\O_X))\to \mathrm{Ext}^{n+1}_{\K [C]}(H^n(\O_X),H^0(\O_X)) .$$
Since these vector spaces are of dimension $1$ this is an isomorphism. We identify in what follows the class $\beta(X, G)$ and  the $k$-line $L_1$ defined in Section 2  with 
their images  in 
$\mathrm{Ext}^{n+1}_{\K [C]}(H^n(\O_X),H^0(\O_X))$. 

\begin{thm}
\label{thm:secondclass}  Let $L_0$ be the $k$-line in 
$\mathrm{Ext}^{n+1}_{\K [C]}(H^n(\O_X),H^0(\O_X))$ which is the image
of $1 \otimes \mathrm{Ext}^{n+1}_{k [C]}(W,k)$ (resp.
$\mathrm{Ext}^{n+1}_{k [C]}(W',k)$) under the isomorphism
in (\ref{eq:noddcase}) (resp. (\ref{eq:nevencase})) if $n$ is odd (resp. if
$n$ is even).  
Let $\alpha(X,G)$ be any generator of $L_0$ over $k$, so that $\alpha(X,G)$
is defined only up to multiplication by an element of $k^*$.  
Then \begin{equation}
\label{eq:alphadef}
\alpha(X,G) = \zeta \cdot \beta(X,G)
\end{equation}
for an element  $\zeta \in \K^*$ which is well-defined up to multiplication by an element of $k^*$.  The constant 
\begin{equation}
\label{eq:invdef}
\mu(X,G) = \zeta^{1-q} \in  \K^*
\end{equation} 
lies in $\K^*$ and is an invariant of the action of $G$ on $X$. 
\end{thm}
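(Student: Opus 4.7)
The plan is essentially linear-algebraic once Propositions \ref{prop:tryitall}, \ref{prop:propnodd}, and \ref{prop:propneven} are granted, and splits into three short steps plus a bookkeeping verification.

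First I would verify that $V := \mathrm{Ext}^{n+1}_{\K [C]}(H^n(\O_X),H^0(\O_X))$ is one-dimensional over $\K$ and that $\beta(X,G)$ is a nonzero element of $V$. The dimension statement follows from the restriction isomorphism $\mathrm{Ext}^{n+1}_{\K [G]}(\cdots) \xrightarrow{\sim} V$ recalled just before the statement of the theorem, combined with Lemma \ref{lem:extcomp} applied to $G$ with $\ell = \K$. Nonvanishing of $\beta(X,G)$ is built into the proof of Lemma \ref{lem:extcomp}: if the image were zero, then $\K \to P_0$ in (\ref{eq:exact}) would admit a $\K[G]$-splitting, contradicting the fact that $\K$ is not projective over $\K[G]$ when $p \mid \#G$.

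Next I would verify that $L_0$ is also a one-dimensional $k$-subspace of $V$ and that $\alpha(X,G)$ is a nonzero $k$-generator determined only up to multiplication by an element of $k^*$. The modules $W$ and $W'$ provided by Proposition \ref{prop:tryitall} are one-dimensional $k$-vector spaces with trivial $C$-action, so
$$\mathrm{Ext}^{n+1}_{k[C]}(W,k) \;\cong\; \mathrm{Ext}^{n+1}_{k[C]}(W',k) \;\cong\; H^{n+1}(C,k),$$
which is one-dimensional over $k$ because $C$ is a nontrivial cyclic $p$-group. The isomorphisms (\ref{eq:noddcase}) and (\ref{eq:nevencase}) of Propositions \ref{prop:propnodd} and \ref{prop:propneven} then exhibit $L_0$ as a $k$-line inside $V$ satisfying $\K\otimes_k L_0 = V$, and $\alpha(X,G)$ is any nonzero element of $L_0$.

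Because $\dim_\K V = 1$ and both $\alpha(X,G)$ and $\beta(X,G)$ are nonzero, the relation $\alpha(X,G) = \zeta\cdot\beta(X,G)$ uniquely determines $\zeta \in \K^*$. Replacing $\alpha(X,G)$ by $c\cdot\alpha(X,G)$ with $c \in k^*$ replaces $\zeta$ by $c\zeta$, so $\zeta$ is well-defined up to $k^*$. For the invariance of $\mu(X,G) = \zeta^{1-q}$, note that $(c\zeta)^{1-q} = c^{1-q}\zeta^{1-q}$, and since $k$ has $q$ elements one has $c^{q-1} = 1$ for every $c \in k^*$, so $c^{1-q} = 1$ and $\mu(X,G)$ is unchanged. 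The containment $\mu(X,G) \in \K^*$ is automatic.

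Once the three supporting propositions are in hand, the theorem is pure bookkeeping about $k$-structures inside a one-dimensional $\K$-vector space. I expect the main obstacle in this section to be the proof of Propositions \ref{prop:tryitall}--\ref{prop:propneven}, which must exhibit $L_0$ as a canonical $k$-line coming from the model $Y_0$ via the exact sequence (\ref{eq:zerogood0}) and the compatibility of trace and inclusion maps at the level of Ext groups; the theorem itself is then essentially a matter of comparing two distinguished $k$-generators of $V$.
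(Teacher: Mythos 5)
Your proposal is correct and follows essentially the same route as the paper, whose own proof of this theorem is a one-line reduction to Propositions \ref{prop:tryitall}, \ref{prop:propnodd}, \ref{prop:propneven} and the one-dimensionality of $\mathrm{Ext}^{n+1}_{\K [C]}(H^n(\O_X),H^0(\O_X))$; you have merely made explicit the bookkeeping (nonvanishing of $\beta(X,G)$ via Lemma \ref{lem:extcomp}, the $k^*$-ambiguity of $\zeta$, and $c^{1-q}=1$ for $c\in k^*$) that the paper leaves implicit.
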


\begin{proof}  This follows from Propositions \ref{prop:tryitall}, \ref{prop:propnodd} and
\ref{prop:propneven}
and the fact that $\mathrm{Ext}^{n+1}_{\K [C]}(H^n(\O_X),H^0(\O_X))$ has dimension
$1$ over $\K$.
\end{proof}

\begin{cor}
\label{lem:hush}
 
Let $F^{-1}$ be the anti-semilinear endomorphism of $\mathrm{Ext}_{\K [C]}(H^n(\O_X),H^0(\O_X))$
induced by the action of $F$ on $H^n(\O_X)$ and the action of $F^{-1}$
on $H^0(\O_X) = \K$,  described as in Theorem 2.9. Then $F^{-1}$ acts on $L_0$ by multiplication by $\mu(X,G)$.  The constant
$\mu(X,G)$ lies in $k^*$.
\end{cor}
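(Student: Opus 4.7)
The plan is to use the explicit construction of $L_0$ from Propositions \ref{prop:propnodd} and \ref{prop:propneven} to reduce the $F^{-1}$-action on $L_0$ to the action of the Frobenius $F_{Y_0}$ on the one-dimensional $k$-vector space $W$ (if $n$ is odd) or $W'$ (if $n$ is even). The key observation is that every map entering the isomorphisms (\ref{eq:noddcase}) and (\ref{eq:nevencase}) is obtained by base change from a morphism of sheaves on $Y_0$, hence commutes with the Frobenius $F$ that raises local sections to their $q$-th power.

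Consider first the case when $n$ is odd. The trace $\mathrm{Tr}_\ast\colon H^n(\O_X)\to H^n(Y,\O_Y)$ and the inclusion $\K\otimes_k W\hookrightarrow H^n(Y,\O_Y)$ come respectively from the sheaf-level trace $\tilde\pi_\ast\O_X\to \O_{Y_0}$ and the inclusion $\O_{Y_0}\to\tilde\pi_\ast\O_X$, so both are $F$-equivariant; hence the isomorphism (\ref{eq:noddcase}) intertwines the anti-semilinear $F^{-1}$ of Corollary \ref{cor:action} on its source with the corresponding natural action on the target $\K\otimes_k\mathrm{Ext}^{n+1}_{k[C]}(W,k)$. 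Under the flat base change identification $H^n(Y,\O_Y)=\K\otimes_k H^n(Y_0,\O_{Y_0})$, the Frobenius decomposes as $F\otimes F_{Y_0}$, where $F_{Y_0}$ is the $k$-linear $q$-power endomorphism of $H^n(Y_0,\O_{Y_0})$. Since $W$ is the kernel of an $F_{Y_0}$-equivariant map, it is $F_{Y_0}$-stable, and being one-dimensional over $k$ is acted on by some scalar $\lambda\in k$. Hence $F$ acts on $\K\otimes_k W$ as $\lambda\cdot(F\otimes 1)$, and Corollary \ref{cor:action} applied to this twisted Frobenius (using $F^{-1}(\lambda)=\lambda$ because $\lambda\in k$) produces an $F^{-1}$-action on $\K\otimes_k\mathrm{Ext}^{n+1}_{k[C]}(W,k)$ that sends $1\otimes\xi$ to $\lambda\cdot(1\otimes\xi)$ for each $\xi$. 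In particular $L_0$ is stable, and $F^{-1}$ acts on it by multiplication by $\lambda$. The case $n$ even is completely parallel: one uses the surjection $H^n(\O_X)^G\to\K\otimes_k W'$ of Proposition \ref{prop:propneven} in place of the inclusion and obtains some $\lambda'\in k$ playing the same role.

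It remains to identify $\lambda$ (resp.\ $\lambda'$) with $\mu(X,G)$. Since $F^{-1}$ has inverse the semilinear $F$-action, it is bijective on the Ext group, so $\lambda\neq 0$ and $\lambda\in k^\ast$. Writing $\alpha(X,G)=\zeta\cdot\beta(X,G)$ and using Theorem \ref{thm:extcomps} together with the anti-semilinearity of $F^{-1}$, one computes
\[
F^{-1}(\alpha(X,G))=F^{-1}(\zeta)\cdot\beta(X,G)=\zeta^{1/q}\cdot\beta(X,G)=\zeta^{(1-q)/q}\cdot\alpha(X,G),
\]
so $\lambda=\zeta^{(1-q)/q}$. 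Raising to the $q$-th power and invoking $\lambda=\lambda^q$ (which holds because $\lambda\in k^\ast$) yields $\lambda=\zeta^{1-q}=\mu(X,G)$, proving simultaneously that $\mu(X,G)\in k^\ast$ and that $F^{-1}$ acts on $L_0$ by multiplication by $\mu(X,G)$. The main subtlety is the careful bookkeeping of semi- versus anti-semilinearity when applying Corollary \ref{cor:action} to the twisted Frobenius $\lambda\cdot(F\otimes 1)$; this works out cleanly precisely because $\lambda$ lies in $k$.
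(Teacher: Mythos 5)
Your argument is correct, and its decisive computation coincides with the paper's: writing $\alpha(X,G)=\zeta\cdot\beta(X,G)$, the anti-semilinearity of $F^{-1}$ together with the fact that $F^{-1}$ fixes $\beta(X,G)$ (Theorem \ref{thm:extcomps}) gives the eigenvalue $F^{-1}(\zeta)/\zeta=\zeta^{(1-q)/q}$ on $L_0$, and the observation that an element of $k^*$ equals its own $q$-th power converts this into $\zeta^{1-q}=\mu(X,G)$. Where you genuinely differ is in how the eigenvalue is shown to lie in $k$. You trace $L_0$ back through the isomorphisms of Propositions \ref{prop:propnodd} and \ref{prop:propneven} to the $k$-rational spaces $W$ (resp.\ $W'$), on which the $q$-power Frobenius acts $k$-linearly by a scalar $\lambda$, and check that all the intervening maps are $F$-equivariant; this shows directly that $F^{-1}$ maps $L_0$ into itself with eigenvalue $\lambda\in k$. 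That verification is essentially the content of the paper's Corollary \ref{cor:roots}, which the paper establishes only after the present corollary; the paper's own proof here works purely formally with $L_0=\zeta L_1$ and the pointwise $F^{-1}$-invariance of $L_1$, and passes from the identity $F^{-1}(\tau\zeta c_1)=\nu(\tau\zeta c_1)$ to $\nu\in k^*$ without separately recording that $F^{-1}(L_0)\subseteq L_0$ --- a step your first paragraph supplies explicitly. So your route front-loads the geometric input ($W$, $W'$ and the $k$-structure of $Y_0$) to justify $\lambda\in k$, while the paper's is shorter but leans on the stability of $L_0$ being implicit in its construction; both arrive at the same identity $\mu(X,G)=\zeta^{1-q}\in k^*$.
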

\begin{proof} Let $L_1$ be the $k$-line of $\mathrm{Ext}_{\K [C]}^{n+1}(H^n(\O_X),H^0(\O_X))$ introduced in Theorem 2.9. It follows from the definitions of Theorem 3.6  that 
$$L_0=\zeta L_1$$ for any $\zeta$ satisfying (3.6). Suppose that $c_1$ is a $k$-basis of $L_1$ so that $\zeta.c_1$ is a $k$-basis of $L_0$. The endomorphism $F^{-1}$ fixes 
$L_1$ by Theorem 2.9. Hence since $F^{-1}$ is anti-semilinear, we have 
$$F^{-1}(\tau \cdot \zeta  \cdot c_1) = \tau\cdot F^{-1}(\zeta) \cdot c_1 = \nu  \cdot (\tau \cdot \zeta \cdot c_1)$$
for $\tau \in k$, where
$$\nu = \frac{F^{-1}(\zeta)}{\zeta}.$$
  This proves that $F^{-1}$ acts as multiplication by $\nu$ on the $k$-line
  $L_0$, so $\nu \in k^*$.  Hence
  $$\nu = F(\nu) = \frac{\zeta}{F(\zeta)} = \zeta^{1 - q}=\mu(X, G)$$
  which completes the proof in view of (\ref{eq:invdef}).
\end{proof}
\begin{cor}
\label{cor:roots}
The action of  $F$ on $\O_{Y_0}$ and on $\O_X$ induces a $k$-linear
action on $W$ and $W'$.   If $n$ is odd (resp. even) then $F$ acts on
$W$ (resp. $W'$) by multiplication by $\mu(X,G) \in k^*$.
\end{cor}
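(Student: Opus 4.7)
The plan is to transport the anti-semilinear action of $F^{-1}$ on $L_0$, which by Corollary \ref{lem:hush} is multiplication by $\mu(X,G)$, through the isomorphism (\ref{eq:noddcase}) (when $n$ is odd) or (\ref{eq:nevencase}) (when $n$ is even) to an $\mathrm{Ext}$-group built from $W$ or $W'$, and to read off the scalar by which $F$ acts on $W$ or $W'$.

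First I would observe that the natural map of sheaves $\O_{Y_0} \to (\tilde\pi)_* \O_X$ is $F$-equivariant, because the Frobenius $s \mapsto s^q$ on sections commutes with any morphism of sheaves of $\mathbb{F}_p$-algebras. Taking $H^n$, the four-term sequence (\ref{eq:zerogood0}) therefore becomes an exact sequence of $k$-vector spaces with commuting $k$-linear Frobenius endomorphisms, so $W$ and $W'$ are $F$-stable one-dimensional $k$-subspaces (resp.\ quotients) inside $H^n(Y_0,\O_{Y_0})$ and $H^n(\O_X)^\Gamma$. Consequently $F$ acts on each as multiplication by scalars $\lambda,\lambda' \in k$, and it remains to identify $\lambda$ (resp.\ $\lambda'$) with $\mu(X,G)$ when $n$ is odd (resp.\ even).

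Assume that $n$ is odd. The isomorphism (\ref{eq:noddcase}) is built from the trace $Tr_*:H^n(\O_X) \to H^n(\O_Y)$ and the inclusion $\K \otimes_k W \hookrightarrow H^n(\O_Y)$, both of which are $F$-equivariant by the same naturality, so the isomorphism itself intertwines the anti-semilinear $F^{-1}$-actions of Corollary \ref{cor:action} on its two sides. Under this identification $L_0$ corresponds to the $k$-line $1 \otimes \mathrm{Ext}^{n+1}_{k[C]}(W,k) \subset \K \otimes_k \mathrm{Ext}^{n+1}_{k[C]}(W,k)$, so by Corollary \ref{lem:hush} the endomorphism $F^{-1}$ must act on this line by multiplication by $\mu(X,G)$. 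On the other hand, applying the formula $T^{-1}(f)(m) = F^{-1}(f(Tm))$ of Corollary \ref{cor:action} to $T = F \otimes \lambda$ on $\K \otimes_k W$ and to $f = 1 \otimes g$ with $g \in \mathrm{Hom}_k(W,k)$, one computes $T^{-1}(1\otimes g) = \lambda \cdot (1\otimes g)$, since the values $g(w) \in k$ are fixed by $F^{-1}$; passing from Hom to Ext via the spectral sequence identification (\ref{eq:degenit}) transports this formula verbatim to cohomology classes. Comparing the two determinations of the scalar forces $\lambda = \mu(X,G)$.

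When $n$ is even the argument is parallel, using the isomorphism (\ref{eq:nevencase}) in place of (\ref{eq:noddcase}) and the $F$-equivariant maps $H^n(\O_X)^G \hookrightarrow H^n(\O_X)$ and $H^n(\O_X)^G \twoheadrightarrow \K \otimes_k W'$. The same explicit computation identifies the $F^{-1}$-action on the $k$-line $1 \otimes \mathrm{Ext}^{n+1}_{k[C]}(W',k)$ with multiplication by $\lambda'$, so Corollary \ref{lem:hush} forces $\lambda' = \mu(X,G)$. The main point requiring care throughout is the bookkeeping of the semilinear and anti-semilinear Frobenius actions on the tensor products $\K \otimes_k (-)$, and checking that the Hochschild--Serre identification (\ref{eq:degenit}) respects these actions so that the Hom-level formula of Corollary \ref{cor:action} really does compute the induced scalar on $\mathrm{Ext}$ classes.
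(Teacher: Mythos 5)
Your proposal is correct and follows essentially the same route as the paper: establish that $F$ acts $k$-linearly on $W$ and $W'$ via naturality of Frobenius on $\O_{Y_0}\to(\tilde\pi)_*\O_X$, use the $F^{-1}$-equivariant Tate-cohomology identification of $\mathrm{Ext}^{n+1}_{k[C]}(W,k)$ with $\mathrm{Hom}_k(W,k)$ to compute that $F^{-1}$ acts there by the eigenvalue of $F$ on $W$ (the same computation $(F^{-1}f)(w)=f(\alpha w)=\alpha f(w)$), and then invoke Corollary \ref{lem:hush} to identify that eigenvalue with $\mu(X,G)$. Your extra care about the $F$-equivariance of the isomorphisms (\ref{eq:noddcase}) and (\ref{eq:nevencase}) makes explicit a step the paper leaves implicit, but the argument is the same.
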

\begin{proof}  The action of $F$ on $\O_{Y_0}$ and on $\O_X$ is
via the map $\alpha \to \alpha^q$ on local sections, and is $k$-linear.
Thus $F$ respects the homomorphism $\O_{Y_0} \to (\tilde \pi)_* \O_X$
in Proposition \ref{prop:tryitall}, so it follows that $F$ acts $k$-linearly
on the one dimensional $k$-vector spaces $W$ and $W'$. Suppose
$n$ is odd.  Since $C$ is cyclic
there are isomorphisms
\begin{eqnarray}
\mathrm{Ext}_{k [C]}^{n+1}(W,k) &= & H^{n+1}(C,\mathrm{Hom}_k(W,k))\nonumber\\
&=& \hat{H}^0(C,\mathrm{Hom}_k(W,k)) \\
&=& \mathrm{Hom}_{k[C]}(W,k)/\mathrm{Tr}_C \mathrm{Hom}_k(W,k)\nonumber
\end{eqnarray}
Because $W \cong k$ with trivial $C$-action, this gives an $F^{-1}$-equivariant
isomorphism between $\mathrm{Ext}_{k[C]}^{n+1}(W,k)$ and $\mathrm{Hom}_k(W,k)$.
Recall that $F^{-1}$ sends an element $f \in \mathrm{Hom}_k(W,k)$ to the homorphism
$F^{-1} f$ defined by $(F^{-1} f)(w) = F^{-1}(f(F(w)))$ for $w \in W$.  Since $\mathrm{dim}_k W = 1$ the action of $F$ on $W$ is given by multiplication by some $\alpha \in k$, and $F$
fixed $k$.  Hence
$$(F^{-1} f)(w) = f(\alpha w) = \alpha f(w)$$
so $F^{-1}$ acts on $\mathrm{Hom}_k(W,k)$ by multiplication by $\alpha$.  Thus
$\alpha$ is also the eigenvalue of $F^{-1}$ on $L_0=\mathrm{Ext}_{k[G]}^{n+1}(W,k)$,
so $\alpha= \mu(X,G)$ by Corollary 3.8.  The proof when $n$ is
even 
is similar.
\end{proof}

We now relate $\mu(X,G)$ to zeta functions.  
Let $\zeta(V/k,T)$ be the zeta function of a smooth projective variety 
$V$ over $k$.  Then $\zeta(V/k,T) \in \mathbb{Z}_p[[T]]$, and the 
congruence formula in \cite[Expos\'e XXII, 3.1]{Groth} is
\begin{equation}
\label{eq:congform}
{\zeta}(V/k,T) =  \prod_{i = 0}^{\mathrm{dim}(V)} \mathrm{det}(1 - FT| H^i(V,\O_V))^{(-1)^{i+1}} \quad \mathrm{mod}\quad p\mathbb{Z}_p[[T]].
\end{equation}
  Write  this formula as
\begin{equation}
\label{eq:numdum}
{\zeta}(V/k,T)\quad \mathrm{mod} \quad p \mathbb{Z}_p[[T]] = 
\frac{\zeta_1(V/k,T)}{\zeta_0(V/k,T)} 
\end{equation}
where 
\begin{equation}
\label{eq:evodd}
\zeta_j(V/k,T) = \prod_{i \equiv j \quad \mathrm{mod} \quad 2} \mathrm{det}(1 - FT| H^i(V,\O_V))
\end{equation}
for $j = 0, 1$.  Note that $\zeta_1(V/k,T)$ and $\zeta_0(V/k,T)$ could have
a common zero, so the formula (\ref{eq:numdum}) does not imply that a 
zero of $\zeta_1(V/k,T)$ (resp. of $\zeta_0(V/k,T)$) is a zero (resp. pole)
of $\zeta(V/k,T) $ mod $p\mathbb{Z}_p[[T]]$.

\begin{cor}
\label{cor:answer}
  If $n$ is odd then
 $\mu(X,G)^{-1}$ is a zero of ${\zeta}_1(Y_0/k,T)$.  Suppose $n$ is
 even.  Let $X_0$ be the quotient of $X$ by the the action of a lift $\phi_X$
 to $\mathrm{Gal}(X/Y_0)$ of the arithmetic Frobenius of $\mathrm{Gal}(Y/Y_0) \equiv \mathrm{Gal}(\K/k)$.
 Then $X_0$ is a smooth projective variety over $k$ such that $ X = \K \otimes_k X_0$, and $\mu(X,G)^{-1}$
is a zero of ${\zeta}_0(X_0/k,T)$.  
\end{cor}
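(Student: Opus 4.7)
My plan is to use Corollary \ref{cor:roots} to identify $\mu(X,G)$ as an eigenvalue of the $q$-power Frobenius on a suitable coherent cohomology group, and then read off the zero of the corresponding factor of the mod $p$ zeta function from the congruence formula (\ref{eq:congform}). In both cases this reduces to showing that $1-\mu(X,G)T$ divides $\det(1-FT\mid H^n(V,\O_V))$ for the appropriate smooth projective $k$-variety $V$.

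For $n$ odd, take $V=Y_0$. By Corollary \ref{cor:roots}, the one-dimensional $k$-subspace $W\subset H^n(Y_0,\O_{Y_0})$ is $F$-stable and $F$ acts on it by multiplication by $\mu(X,G)$. Extending a basis of $W$ to a basis of $H^n(Y_0,\O_{Y_0})$ puts the matrix of the $k$-linear endomorphism $F$ in block triangular form, so $1-\mu(X,G)T$ divides $\det(1-FT\mid H^n(Y_0,\O_{Y_0}))$, which appears as one of the factors in the product (\ref{eq:evodd}) defining $\zeta_1(Y_0/k,T)$. Hence $\mu(X,G)^{-1}$ is a zero of $\zeta_1(Y_0/k,T)$.

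For $n$ even, I first verify that $X_0=X/\langle\phi_X\rangle$ is a smooth projective variety over $k$ with $X=\K\otimes_kX_0$. This is standard Galois descent: the lift $\phi_X$ splits the short exact sequence $1\to G\to\Gamma\to\mathrm{Gal}(\K/k)\to 1$ over the closed subgroup topologically generated by Frobenius, and smoothness and projectivity descend because they are fpqc-local. The identification $H^n(X_0,\O_{X_0})=H^n(\O_X)^{\phi_X}$ then holds, and the restriction of the Artin-Schreier $F$ on $H^n(\O_X)$ to this $k$-subspace is the $k$-linear Frobenius $F_{X_0}$ appearing in (\ref{eq:congform}). Since the absolute Frobenius commutes with every scheme endomorphism of characteristic $p$, $F$ commutes with $\Gamma$, preserves the $k$-subspace $H^n(\O_X)^\Gamma$, and makes the exact sequence (\ref{eq:zerogood0}) of Proposition \ref{prop:tryitall} a four-term sequence of $k[F]$-modules lying inside $H^n(X_0,\O_{X_0})$. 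Extracting the short exact sequence whose quotient is $W'$, on which $F$ acts by $\mu(X,G)$ by Corollary \ref{cor:roots}, multiplicativity of characteristic polynomials in short exact sequences yields that $1-\mu(X,G)T$ divides $\det(1-FT\mid H^n(X_0,\O_{X_0}))$, which is a factor of $\zeta_0(X_0/k,T)$ since $n$ is even.

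The main obstacle I anticipate is not the eigenvalue argument itself but the bookkeeping of the several Frobenii in play: the Artin-Schreier $F$ on $H^n(\O_X)$, the Galois lift $\phi_X$, the Frobenius of $\K$ over $k$, and the $k$-linear $F_{X_0}$ appearing in the zeta function. One must verify that under the identification $H^n(X_0,\O_{X_0})=H^n(\O_X)^{\phi_X}$ these are mutually compatible, and that $W$, $W'$, and $H^n(\O_X)^\Gamma$ are genuine $F$-stable $k$-subspaces rather than only $\K$-semilinear pieces. Once this is settled, the proof is a formal consequence of Corollary \ref{cor:roots} and multiplicativity of characteristic polynomials.
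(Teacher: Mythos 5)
Your argument is correct and follows essentially the same route as the paper's own proof: identify $\mu(X,G)$ as the $F$-eigenvalue on $W$ (inside $H^n(Y_0,\O_{Y_0})$) for $n$ odd, and on the subquotient $W'$ of $H^n(\O_X)^{\Gamma}\subset H^n(X_0,\O_{X_0})$ for $n$ even, then conclude that $1-\mu(X,G)T$ divides the relevant factor of the congruence formula. The extra care you take with block-triangularization and with checking that the various Frobenii agree is sound but matches what the paper does implicitly.
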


\begin{proof}
If $n$ is odd, we have shown $\mu(X,G)$ is the eigenvalue of $F$ acting
on the one-dimensional $k$-space $W$ inside $H^n(Y_0,\O_{Y_0})$.  
Hence $1 - \mu(X,G)^{-1} F$ is not invertible on $H^n(Y_0,\O_{Y_0})$,
so $\mu(X,G)^{-1}$ is a zero of $\zeta_1(Y_0/k,T)$. Suppose $n$ is even.
We have shown $\mu(X,G)$ is the eigenvalue of $F$ acting
on a one-dimensional $k$-space $W'$ which is a quotient
of $H^n(\O_X)^\Gamma$, where $\Gamma = \mathrm{Gal}(X/Y_0)$.
It is shown in Lemma \ref{lem:conshyp} below that $\Gamma$ is the semidirect product of the normal subgroup $G$ with the closure
$\overline {\langle \phi_X \rangle}$ of the subgroup generated by
$\phi_X$.  Since $X \to Y_0$ is a pro-\'etale cover, it follows that $X_0$ is smooth and projective over $k$,
and that $X = \K \otimes_k X_0$.  Hence $H^n(\O_X) = \K \otimes_k H^n(X_0,\O_{X_0})$,
so $H^n(\O_X)^{\overline {\langle \phi_X \rangle}} = H^n(X_0,\O_{X_0})$.
 Thus $W'$ is a subquotient of $H^n(X_0,\O_{X_0})$, so
as above, $1 - \mu(X,G)^{-1} F$ is not invertible on $H^n(X_0,\O_{X_0})$.
Since we assumed $n$ is even, this shows $\mu(X,G)$ is a zero
of $\zeta_0(X_0/k,T)$.
\end{proof}
\begin{Example}
\label{ex:curves} {\rm Suppose that  $X$ is a curve and $n = 1$ in Hypothesis
\ref{hyp:dimension2}.  Then
$$\zeta(Y_0,T) = \frac{P_1(Y_0,T)}{(1 - T)(1 - qT)}$$
when $P_1(Y_0,T)\in \mathbb{Z}[T]$ is the characteristic polynomial of Frobenius acting
on $H^1_{et}(Y_0,\mathbb{Q}_\ell)$ for any prime $\ell$ different from $p$.
One has $\zeta_0(Y_0,T) = (1 - T)$ since $F$ acts trivially on $H^0(Y_0,\O_{Y_0}) = k$
and $H^j(Y_0,\O_{Y_0}) = 0$ if $j > 1$.  Since $(1 - q T)^{-1} \equiv 1$ mod $p \mathbb{Z}_p[[T]]$, we conclude that
\begin{equation}
\label{eq:curveanswer}
 P_1(Y_0,T) \equiv \zeta_1(Y_0,T)  \quad \mathrm{mod} \quad p\mathbb{Z}_p[[T]]
\end{equation}
Thus Corollary \ref{cor:answer} shows $\mu(X,G)^{-1}$ is a zero in $k$ of $P_1(Y_0,T)$.}
\end{Example}

\begin{Example}
\label{ex:ellcurves} {\rm Suppose that in example \ref{ex:curves}, $X$ is an elliptic curve. 
Since $\pi:X \to Y$ is an \'etale $G = \mathbb{Z}/p$ cover, $Y$ must be an
ordinary elliptic curve, and $Y_0$ has genus $1$ over $k$.  Because $Y_0$
has a point defined over $k$ by the Weil bound, $Y_0$ is isomorphic to an elliptic curve over $k$, and $Y_0$ is ordinary.  Since $\mathrm{dim}_k H^1(Y_0,\O_{Y_0}) = 1$,
we see that $\mu(X,G)^{-1}$ is the unique zero of $P_1(Y_0,T)$ in $k$.
Thus $\mu(X,G)$ is the image in $k$ of the unit root of Frobenius acting on 
$H^1_{et}(Y_0,\mathbb{Q}_\ell)$.  Suppose now that $k = \mathbb{Z}/p$,
and let $\underline{0}$ be the origin of $Y_0$, so that $\underline{0}$
has residue field $k$.  Let $t$ be a uniformizing parameter in the local
ring $\O_{Y_0,\underline{0}}$.  There is a unique differential $\omega \in H^0(Y_0,\Omega^1{Y_0/k})$ having an expansion 
$$\omega = \sum_{\nu = 1}^\infty c_\nu t^{\nu -1} dt$$
at $\underline{0}$ for which $c_1 = 1$.  In \cite[Appendix 2, \S5]{Lang}, Lang 
defines $c_p$ to be the Hasse invariant of $Y_0$.  Lang shows that changing
$t$ to $bt$ for some $b \in \O_{Y_0}^*$ changes $c_p$ by $\overline{b}^{p-1}$
where $\overline{b}$ is the image of $b$ in the residue field $k$ of $\O_{Y_0}$.
Since we have now assumed $k = \mathbb{Z}/p$, one has $\overline{b}^{p-1} = 1$,
so $c_p = c_p(Y_0) \in k$ is independent of the choice of $t$.  
The formula in \cite[Appendix 2, \S2, Thm. 2]{Lang} shows 
$c_p(Y_0)$ is the eigenvalue of $F$ acting on $H^1(Y_0,\O_{Y_0})$.
So we conclude from the fact that $W = H^1(Y_0,\O_{Y_0})$ in Corollary \ref{cor:roots} that $\mu(X,G)$ is the Hasse invariant $c_p(Y_0)$.}
\end{Example}

\section{Proof of Propositions \ref{prop:tryitall}, \ref{prop:propnodd} and
\ref{prop:propneven}.}
\label{s:proofitall}

Throughout this section we assume Hypothesis \ref{hyp:dimension2}.

 If $\mathcal{F}$ is a  $G$-sheaf on $X$ (resp. $Y$) we denote  $H^i(X,  \mathcal {F}) $ (resp.   $H^i(Y,  \mathcal {F})) $ by $H^i(\mathcal{F})$. The
 quotient morphism  $\pi:X \to Y= X/G$ induces  a  natural morphism  $\iota: \mathcal{O}_{Y}\rightarrow \pi_{*}(\mathcal{O}_{X})$ which identifies $\mathcal{O}_{Y}$ with
 $\pi_*(\mathcal{O}_{X})^G$. Moreover, we have a morphism $ \pi_{*}(\mathcal{O}_{X})\rightarrow \mathcal{O}_{Y}$ induced by the trace element $Tr_G$ of $\K [G]$. We denote by
 $\iota_*: H^n( \mathcal{O}_{Y})\rightarrow H^n( \mathcal{O}_{X})^G $ and $Tr_*: H^n( \mathcal{O}_{X})\rightarrow H^n( \mathcal{O}_{Y})$ the morphisms of
 $\bar k [G]$-modules respectively induced by $\iota$ and $Tr_G$. We define $L$ and $L'$ by the exact sequence:
\begin{equation}
\label{eq:coker}
0\arrow{}{} L \arrow{}{} H^n(Y,\O_Y) \arrow{\iota_*}{} H^n(\O_X)^G \arrow{}{} L' \arrow{}{} 0
\end{equation}

\subsection {Proof of Proposition 3.3}\ 
 
First we need some preliminary results. 
\begin{lemma}
\label{lem:conshyp}
The constant field of $Y_0$ is $k$, and  $\mathrm{Gal}(X/Y_0)$ is the semi
direct product of $\mathrm{Gal}(Y/Y_0) \cong \mathrm{Gal}(\K /k ) \cong \hat {\mathbb{Z}}$
with the normal subgroup $G = \mathrm{Gal}(X/Y)$.  A lift $\phi_X$
to $\mathrm{Gal}(X/Y_0)$ of the arithmetic Frobenius automorphism $1 \otimes F$ of $\mathrm{Gal}(Y/Y_0)$
is well defined up to an element of $G$.  \end{lemma}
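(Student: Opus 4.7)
The plan is to proceed in three steps: first to identify the constant field of $Y_0$, then to describe the extension of Galois groups coming from the factorization $X \xrightarrow{\pi} Y \to Y_0$, and finally to split this extension using the projectivity of $\hat{\mathbb{Z}}$.

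For the constant field, since $X$ is a projective, hence connected, variety over $\K$, the quotient $Y = X/G$ is also connected. By Hypothesis \ref{hyp:dimension2}(a) we have $Y = \K \otimes_k Y_0$, so $Y_0$ is geometrically connected over $k$. Being smooth, $Y_0$ is also geometrically reduced, and a geometrically connected and geometrically reduced $k$-scheme of finite type has constant field equal to $k$.

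For the Galois structure, Hypothesis \ref{hyp:dimension2}(b) says that $\tilde\pi:X \to Y_0$ is Galois with group $\Gamma$. The factorization $\tilde\pi = (Y \to Y_0)\circ \pi$ produces a short exact sequence of profinite groups
\begin{equation*}
1 \to G \to \Gamma \to \mathrm{Gal}(Y/Y_0) \to 1,
\end{equation*}
and since the constant field of $Y_0$ is $k$, the base change $Y = \K \otimes_k Y_0$ is a pro-\'etale cover with Galois group $\mathrm{Gal}(Y/Y_0) = \mathrm{Gal}(\K/k) \cong \hat{\mathbb{Z}}$, topologically generated by the arithmetic Frobenius $1 \otimes F$.

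For the splitting, $\hat{\mathbb{Z}}$ has strict cohomological dimension $1$ and is a projective object in the category of profinite groups, so the displayed extension admits a continuous section. The image of the topological generator of $\hat{\mathbb{Z}}$ under such a section is an element $\phi_X \in \Gamma$ projecting to $1 \otimes F$, and $\Gamma$ becomes the internal semidirect product of the normal subgroup $G$ with the closure $\overline{\langle \phi_X \rangle}$. Finally, any other lift $\phi_X'$ of $1 \otimes F$ satisfies $\phi_X' \phi_X^{-1} \in \ker(\Gamma \to \mathrm{Gal}(Y/Y_0)) = G$, so $\phi_X$ is well defined modulo $G$.

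The main obstacle is justifying the splitting at the profinite level: given a set-theoretic lift $\phi$ of the Frobenius, one has to show that one can adjust $\phi$ by an element of $G$ so that $\overline{\langle \phi \rangle}$ maps isomorphically to $\hat{\mathbb{Z}}$, i.e.\ has trivial intersection with $G$. I would handle this by choosing compatible lifts in each finite quotient $\Gamma/U$ through open normal $U \subset \Gamma$ contained in $G$ and passing to the inverse limit via compactness of $\Gamma$.
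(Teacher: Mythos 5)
Your proposal is correct and follows essentially the same route as the paper: the constant field is identified via geometric connectedness of $Y_0$ (the paper phrases this as $Y = \K \otimes_k Y_0$ being a variety with constant field $\K$), and the extension $1 \to G \to \mathrm{Gal}(X/Y_0) \to \mathrm{Gal}(Y/Y_0) \to 1$ is split using that $\mathrm{Gal}(Y/Y_0) \cong \hat{\mathbb{Z}}$ is pro-free on one generator, hence projective. Your closing sketch of the inverse-limit argument for the splitting is just the standard proof of that projectivity, which the paper takes for granted.
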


\begin{proof}  The constant field of $Y_0$ is $k$ because $Y = \K \otimes_k Y_0$
is a variety with constant field $\K$.  The exact sequence 
$$1 \to G \to \mathrm{Gal}(X/Y_0) \to \mathrm{Gal}(Y/Y_0) \to 1$$
splits because $\mathrm{Gal}(Y/Y_0)$ is pro-free on one generator.  The 
rest of the Lemma is now clear.
\end{proof}
\begin{lemma}
\label{lem:okey}
Recall that we have assumed  that $Y = \K \otimes_k Y_0$ for a smooth projective variety $Y_0$ over
$k$ and that the induced morphism $X \to Y_0$ is \'etale and Galois.  Let $\Gamma = \mathrm{Gal}(X/Y_0)$.  The flat base change isomorphism $H^n(Y,\O_Y) = \K \otimes_k H^n(Y_0,\O_{Y_0})$ gives an exact sequence
\begin{equation}
\label{eq:zerogood3}
0 \arrow{}{} W \arrow{}{} H^n(Y_0,\O_{Y_0}) \arrow{\iota_*}{}  H^n(\O_X)^\Gamma \arrow{}{} W' \arrow{}{} 0
\end{equation}
in which $W$ and $W'$ are $k$-vector spaces.   The tensor product
of (\ref{eq:zerogood3}) with $\K$ over $k$ is the exact sequence (\ref{eq:coker}).
\end{lemma}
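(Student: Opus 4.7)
The plan is to view (\ref{eq:coker}) as an exact sequence of $\K$-vector spaces equipped with a compatible semi-linear Frobenius action coming from the lift $\phi_X$ of Lemma~\ref{lem:conshyp}, and then to take $F$-invariants term by term to produce the desired sequence over $k$. Tensoring back up with $\K$ will recover (\ref{eq:coker}) via Lemma~\ref{lem:Milne}.

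First I would note that $\phi_X$ modulo $G$ generates $\Gamma/G \cong \hat{\mathbb{Z}}$ and induces on every term of (\ref{eq:coker}) a Frobenius endomorphism $F$ that is semi-linear over $\K$, and for which $\iota_*$ is equivariant; so $L = \ker(\iota_*)$ and $L' = \mathrm{coker}(\iota_*)$ inherit $F$-actions. The central task is to show each term is \emph{$F$-semisimple}, i.e.\ $F$ acts bijectively (equivalently, the nilpotent summand in Lemma~\ref{lem:Milne} vanishes). For $H^n(Y,\O_Y)$ this is immediate from the flat base change isomorphism $H^n(Y,\O_Y) = \K \otimes_k H^n(Y_0,\O_{Y_0})$ cited in the lemma. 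To handle $L$ and $L'$ I would invoke the Cartan--Leray spectral sequence
\[
E_2^{p,q} = H^p(G, H^q(\O_X)) \Longrightarrow H^{p+q}(Y, \O_Y),
\]
which under Hypothesis~\ref{hyp:dimension2} has only the rows $q = 0$ and $q = n$ nontrivial and is $F$-equivariant because $G$ is normal in $\Gamma$. Its edge filtration identifies $L$ with $H^n(G,\K)$ and realizes $L'$ as an $F$-stable subspace of $H^{n+1}(G,\K)$; since $G$ acts trivially on $\K$, the coefficient identification $H^i(G,\K) = \K \otimes_k H^i(G,k)$ makes each $H^i(G,\K)$ $F$-semisimple, and an $F$-stable subspace inherits the property.

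The main obstacle is the middle term $H^n(\O_X)^G$, which is not directly presented as a scalar extension from $k$. The same spectral sequence places it in an $F$-equivariant short exact sequence
\[
0 \to H^n(Y,\O_Y)/L \to H^n(\O_X)^G \to L' \to 0
\]
whose outer terms are $F$-semisimple by what precedes. A snake-lemma argument applied to $F$ as vertical map then forces $F$ to be bijective on the middle as well, so $H^n(\O_X)^G$ is $F$-semisimple. Since the $G$- and $F$-actions commute, Lemma~\ref{lem:Milne} yields the identification $\K \otimes_k H^n(\O_X)^\Gamma = H^n(\O_X)^G$.

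Finally, with every term of (\ref{eq:coker}) $F$-semisimple, Lemma~\ref{lem:Milne} provides surjectivity of $F-1$ on each and hence exactness of the $F$-invariants functor on the full sequence. Setting $W = L^F$ and $W' = (L')^F$, which are automatically $k$-vector spaces, produces the desired four-term exact sequence, and tensoring it term by term with $\K$ over $k$ recovers (\ref{eq:coker}) by Lemma~\ref{lem:Milne} once more.
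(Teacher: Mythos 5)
Your argument is correct, but it reaches the conclusion by a noticeably different route than the paper. The paper's proof runs in the direction $k \to \K$: it observes that $\phi_X$, being induced by an automorphism of $X$ that normalizes $G$, acts as a semilinear \emph{automorphism} of $H^n(\O_X)^G$, applies Lemma \ref{lem:Milne} to get $H^n(\O_X)^G = \K \otimes_k H^n(\O_X)^\Gamma$, notes that $\iota_*$ is then the base change of the $k$-linear map $H^n(Y_0,\O_{Y_0}) \to H^n(\O_X)^\Gamma$, and defines $W$, $W'$ as its kernel and cokernel, with exactness of $\K \otimes_k -$ doing the rest. You instead go $\K \to k$, establishing that $F$ is bijective on all four terms of (\ref{eq:coker}) and then taking $F$-invariants; the two mechanisms are equivalent, both resting on Lemma \ref{lem:Milne}. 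The one place where your route is genuinely heavier is the Cartan--Leray spectral sequence: it is not needed here, since $F$ is automatically bijective on $H^n(\O_X)^G$ (it comes from the automorphism $\phi_X$ of $X$) and hence on $L$ and $L'$ as the kernel and cokernel of the $F$-equivariant map $\iota_*$ between spaces on which $F$ is bijective. As a byproduct, though, your two-row spectral sequence gives the identifications $L \cong H^n(G,\K)$ and $L' \cong H^{n+1}(G,\K)$, which the paper proves separately (and more laboriously, via the resolution $P^\bullet$) in the proposition following this lemma; so the detour is not wasted, it just proves more than Lemma \ref{lem:okey} requires. One small imprecision: the $F$-action on $H^i(G,\K)$ coming from $\phi_X$ involves conjugation by $\phi_X$ on $G$ as well as the semilinear action on coefficients, so it need not literally be $F \otimes 1$ on $\K \otimes_k H^i(G,k)$; but it is still a composition of bijections, which is all your argument uses.
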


\begin{proof}
  Recall that $\phi_X \in \Gamma$ is a choice
 of lift of the arithmetic Frobenius $\phi_Y \in \mathrm{Gal}(Y/Y_0) \cong \mathrm{Gal}(\K/k)$. By Lemma \ref{lem:conshyp}, $\Gamma$ is the semi-direct product of the normal subgroup $G$
 with the closure $\overline {\langle \phi_X \rangle}$ of the subgroup generated by $\phi_X$.  Hence $\phi_X$ acts as a 
 semi-linear automorphism of the $\K$-vector space $H^n(\O_X)^G$, and
 \begin{equation}
 \label{eq:gammainv}
 (H^n(\O_X)^G) ^{\overline {\langle \phi_X \rangle}} = H^n(\O_X)^\Gamma.
 \end{equation}
 Lemma \ref{lem:Milne} shows
 $$H^n(\O_X)^G = \K \otimes_k (H^n(\O_X)^G)^{\overline {\langle \phi_X \rangle}}.$$
 since $\phi_X$ is an automorphism of $H^n(\O_X)^G$.
 Combining this with (\ref{eq:gammainv}) proves that 
 $$H^n(\O_X)^G = \K \otimes_k H^n(\O_X)^\Gamma.$$
 Thus $\iota_*:H^n(Y,\O_Y) \to H^n(\O_X)^G$ results from tensoring
 the natural homomorphism $H^n(Y_0,\O_{Y_0}) \to H^n(\O_X)^\Gamma$
 with $\K$ over $k$.  Since tensoring with $\K$ over $k$ is exact the tensor product of (4.2) with $\K$ over $k$ is (4.1) as required.   \end{proof}

Since it follows from  Lemma 4.2  that  
$$L=\K\otimes_k W \ \ \mbox{and}\   L'=\K\otimes_k W' $$ we note that in order to complete the proof of Proposition 3.3 it suffices to show that $L$ and $L'$ are one-dimensional $\K$-vector spaces. This will be a consequence of   Hypothesis 3.1 and the next   proposition. 

\begin {prop}There exist $\bar k$-isomorphisms of vector spaces

$$ L\simeq H^n(G, \bar k), \ \ L'\simeq H^{n+1}(G, \bar k) $$
where $G$ acts trivially on $\bar k$.
\end {prop}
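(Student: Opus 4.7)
The plan is to deduce both isomorphisms from the Cartan--Leray spectral sequence
$$E_2^{p,q} = H^p(G, H^q(\O_X)) \Longrightarrow H^{p+q}(Y, \O_Y)$$
attached to the tame Galois cover $\pi: X \to Y$, and to read off $L$ and $L'$ from the filtration in total degree $n$.

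The spectral sequence arises as follows. The morphism $\pi$ is finite, so $R\pi_* \O_X = \pi_* \O_X$ and $R\Gamma(X, \O_X) = R\Gamma(Y, \pi_* \O_X)$ as complexes of $\K[G]$-modules. Because the action of $G$ on $X$ is tame, each stalk of the $G$-equivariant sheaf $\pi_* \O_X$ is a projective $\O_Y[G]$-module (the local normal basis theorem for tamely ramified extensions, in the spirit of \cite{Nak}). Consequently the canonical map $(\pi_* \O_X)^G \to R\mathrm{Hom}_{\K[G]}(\K, \pi_* \O_X)$ is a quasi-isomorphism, and applying $R\Gamma(Y, -)$ gives $R\Gamma(Y, \O_Y) \simeq R\mathrm{Hom}_{\K[G]}(\K, R\Gamma(X, \O_X))$. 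The Grothendieck spectral sequence of this composition is the one displayed above.

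Under Hypothesis \ref{hyp:dimension2} the $E_2$-page has only two nonzero rows, $q = 0$ (with $E_2^{p, 0} = H^p(G, \K)$) and $q = n$ (with $E_2^{p, n} = H^p(G, H^n(\O_X))$). A direct bidegree check shows that the only potentially nonzero differential on any page is $d_{n+1}: E_{n+1}^{p, n} \to E_{n+1}^{p+n+1, 0}$, and in particular none enters or exits the position $(n, 0)$, so $E_\infty^{n, 0} = E_2^{n, 0} = H^n(G, \K)$. The filtration on $H^n(\O_Y)$ therefore has only two nonzero graded pieces, giving the short exact sequence
$$0 \to H^n(G, \K) \to H^n(\O_Y) \to \ker\bigl(d_{n+1}^{0, n}\bigr) \to 0,$$
in which the composition of the surjection with the inclusion $\ker(d_{n+1}^{0, n}) \hookrightarrow H^n(\O_X)^G$ is precisely the edge map $\iota_*$ of (\ref{eq:coker}). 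Hence $L = \ker \iota_* \cong H^n(G, \K)$, while $L' = \mathrm{coker}\, \iota_*$ identifies with the image of $d_{n+1}^{0, n}$ in $H^{n+1}(G, \K)$.

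Finally, $\dim Y = n$ forces $H^{n+1}(\O_Y) = 0$, so every $E_\infty^{p, q}$ with $p + q = n + 1$ vanishes; in particular $E_\infty^{n+1, 0} = \mathrm{coker}\bigl(d_{n+1}^{0, n}\bigr) = 0$, $d_{n+1}^{0, n}$ is surjective, and $L' \cong H^{n+1}(G, \K)$, completing the argument. The main obstacle is the construction of the Cartan--Leray spectral sequence in the ramified tame setting; once the local $\O_Y[G]$-projectivity of $\pi_* \O_X$ furnished by tameness is in hand, the remainder is routine spectral-sequence bookkeeping.
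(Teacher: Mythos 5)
Your argument is correct, but it reaches the two isomorphisms by a genuinely different route from the paper. The paper never constructs the Cartan--Leray spectral sequence. Instead it first proves (its Lemma 4.4) that the trace map $Tr_*\colon H^n(\O_X)_G\to H^n(\O_Y)$ is an isomorphism and that $\iota_*\circ Tr_*$ is the norm map $H^n(\O_X)_G\to H^n(\O_X)^G$; this identifies $L$ and $L'$ with the Tate cohomology groups $\hat H^{-1}(G,H^n(\O_X))$ and $\hat H^{0}(G,H^n(\O_X))$. It then dimension-shifts $n+1$ steps along the perfect resolution $0\to\K\to P_0\to\cdots\to P_n\to H^n(\O_X)\to 0$ of Lemma 2.2 (carried out by hand with the Snake Lemma, using that the $P_i$ are projective, hence injective, over $\K[G]$) to land in $H^n(G,\bar k)$ and $H^{n+1}(G,\bar k)$. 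It is worth noting that the two proofs consume the same geometric input at the decisive moment: you use $\dim Y=n$ to force $H^{n+1}(\O_Y)=0$ and hence surjectivity of $d_{n+1}^{0,n}$, while the paper uses $H^{n+1}(\mathcal L)=0$ for the trace kernel sheaf $\mathcal L$ to force surjectivity of $Tr_*$. Your approach is more conceptual and handles both isomorphisms in one degeneration argument, but its cost is exactly the point you flag: justifying $R\Gamma(Y,\O_Y)\simeq R\mathrm{Hom}_{\K[G]}(\K,R\Gamma(X,\O_X))$, for which you need both the local cohomological triviality of $\pi_*\O_X$ (under Hypothesis 3.1 the cover $X\to Y$ is in fact \'etale, so $\pi_*\O_X$ is locally free of rank one over $\O_Y[G]$ and this is immediate) \emph{and} the identification $(\pi_*\O_X)^G=\O_Y$, which you should state explicitly since the quasi-isomorphism you assert only gives the derived invariants. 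The paper's route is more pedestrian but buys something it reuses later: the explicit trace map $Tr_*$ and the identity $\iota_*\circ Tr_*=Tr_G$ are the working tools in the proofs of Propositions 3.4 and 3.5, whereas your spectral sequence would have to be supplemented by that lemma anyway.
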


\noindent \proof  We start the proof by establishing a lemma.
\begin {lemma}The $\bar k$-linear map $Tr_*$ induces an isomorphism of $\bar k$-vector spaces
$$Tr_{*}:H^{n}(\mathcal{O}_{X})_{G}\rightarrow H^{n}(O_{Y}).\ $$

\noindent Moreover, the composition of $Tr_*$ with $\iota_*$  is the map
$$Tr_G: H^{n}(\mathcal{O}_{X})\rightarrow H^{n}(\mathcal{O}_{X})$$
induced by the multiplication by the trace  element $Tr_G$ of $\bar k [G]$.
\end{lemma}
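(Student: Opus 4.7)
For the composition identity $\iota_*\circ Tr_* = Tr_G$, I would argue at the sheaf level. Denote the sheaf-level trace $\pi_*\O_X\to\O_Y$ by $\tau$. For a local section $s$ of $\pi_*\O_X$ one has $\tau(s)=\sum_{g\in G}g\cdot s$, viewed as a section of $\O_Y=(\pi_*\O_X)^G$ via $\iota$. Hence $\iota\circ\tau\colon\pi_*\O_X\to\pi_*\O_X$ is multiplication by $Tr_G\in\bar k[G]$ as a sheaf endomorphism, and applying $H^n(Y,-)$ yields the claim.

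For the substantive assertion---that $Tr_*$ induces an isomorphism $H^n(\O_X)_G\xrightarrow{\sim}H^n(\O_Y)$---I would use a spectral sequence. Since $\pi$ is finite, $R^i\pi_*\O_X=0$ for $i>0$, and so $H^i(\O_X)=H^i(Y,\pi_*\O_X)$. Tameness of the $G$-cover implies that $\pi_*\O_X$ is cohomologically trivial as a sheaf of $\O_Y[G]$-modules---a standard consequence of tameness, closely connected to the theorem of Nakajima invoked in Lemma \ref{lem:coherent}. In particular $\tau$ factors through an isomorphism $(\pi_*\O_X)_G\cong\O_Y$, and the derived coinvariants $\pi_*\O_X\Lotimes_{\bar k[G]}\bar k$ collapse to the underived $(\pi_*\O_X)_G=\O_Y$.

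The crux is then the hyper-Tor spectral sequence
\[
E_2^{p,q}=H_{-p}\bigl(G,\,H^q(\O_X)\bigr)\;\Longrightarrow\;H^{p+q}(Y,\O_Y),
\]
obtained by tensoring a projective $\bar k[G]$-resolution of $\bar k$ with $\pi_*\O_X$ and using the cohomological triviality to identify the abutment. Hypothesis \ref{hyp:dimension2} forces $H^q(\O_X)=0$ for $q\notin\{0,n\}$, so the only term contributing to the abutment in total degree $n$ is $E_2^{0,n}=H^n(\O_X)_G$; the other candidate $E_2^{-n,0}=H_n(G,\bar k)$ has total degree $-n$ instead. Outgoing differentials from $E_r^{0,n}$ target positions with $p>0$, where $E_2$ vanishes, while incoming differentials at pages $r\ge 2$ would require $H^{n+r-1}(\O_X)\ne 0$, which fails for $n\ge 1$. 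Hence $E_\infty^{0,n}=E_2^{0,n}=H^n(\O_X)_G$, and this accounts for all of $H^n(Y,\O_Y)$. The corresponding edge map is induced by the factorisation $\pi_*\O_X\twoheadrightarrow(\pi_*\O_X)_G\cong\O_Y$ of $\tau$, so it coincides with $Tr_*$.

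The main technical obstacle is justifying the cohomological triviality of $\pi_*\O_X$ as an $\O_Y[G]$-sheaf---equivalently, the validity of tame descent identifying the derived $G$-coinvariants of $R\Gamma(X,\O_X)$ with $R\Gamma(Y,\O_Y)$. This is a standard feature of tame Galois covers but requires a careful invocation; once in hand, the spectral-sequence degree count above is routine.
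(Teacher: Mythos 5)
Your argument is correct in outline, but it takes a genuinely different route from the paper's. The paper proves the first assertion by two elementary long-exact-sequence chases: it writes $0\to\mathcal{L}\to\pi_*\mathcal{O}_X\to\mathcal{O}_Y\to 0$ for the kernel of the sheaf trace, uses $H^{n+1}(Y,\mathcal{L})=0$ (since $\dim Y=n$) to get surjectivity of $Tr_*$ with kernel the image of $H^n(\mathcal{L})$, then identifies $\mathcal{L}=I_G\pi_*\mathcal{O}_X$ (from local freeness of rank one of $\pi_*\mathcal{O}_X$ over $\mathcal{O}_Y[G]$) and applies the same top-degree vanishing to the kernel of $\oplus_{h\in G}(1-h):\oplus_h\pi_*\mathcal{O}_X\to\mathcal{L}$ to show that kernel equals $I_GH^n(\mathcal{O}_X)$. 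You instead pass to derived $G$-coinvariants and run the hyper-Tor spectral sequence, collapsing it by the same dimension count. The two approaches need essentially the same geometric input — your ``cohomological triviality of $\pi_*\mathcal{O}_X$ as an $\mathcal{O}_Y[G]$-sheaf'' is interchangeable with the paper's ``$\pi_*\mathcal{O}_X$ is locally free of rank one over $\mathcal{O}_Y[G]$'', and the paper asserts the latter with no more justification than you give the former — so the technical obstacle you flag is not a gap relative to the paper's own standard of rigor. What your route buys is a cleaner conceptual statement (the whole of $R\Gamma(Y,\mathcal{O}_Y)$ is the derived coinvariants of $R\Gamma(X,\mathcal{O}_X)$, of which the lemma is the top-degree shadow); what the paper's route buys is the avoidance of any spectral sequence and of the identification of edge maps, which is the one place where your write-up is a little quick. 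One small simplification available to you: the collapse of your spectral sequence in total degree $n$ uses only $H^q(\mathcal{O}_X)=0$ for $q>n$ (forced by $\dim X=n$), not the full strength of Hypothesis \ref{hyp:dimension2}; this matches the fact that the paper's proof of this lemma likewise never uses the vanishing in intermediate degrees. Your sheaf-level verification of the ``moreover'' clause agrees with what the paper leaves implicit.
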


\noindent \proof  The exact sequence of sheaves on $Y$
$$\{0\}\rightarrow \mathcal{L}\rightarrow \pi_{*}(\mathcal{O}_{X})\rightarrow \mathcal{O}_{Y}\rightarrow \{0\}\ $$
associated to $Tr_{G}: \pi_{*}(\mathcal{O}_{X})\rightarrow \mathcal{O}_{Y}$ induces  a long exact sequence of
$\bar k$-vector spaces
$$H^{n}(\mathcal{L})\rightarrow H^{n}(\mathcal{O}_{X})\rightarrow H^{n}(\mathcal{O}_{Y})
\rightarrow H^{n+1}(\mathcal{L})\rightarrow ...$$
Because dim$(Y)=n$,   $H^{n+1}(\mathcal{L})=0$. Therefore
$Tr_{*}: H^{n}(\mathcal{O}_{X})\rightarrow  H^{n}(\mathcal{O}_{Y})$ is onto with
kernel $M$ equal to the image  of $ H^{n}(\mathcal{L})$  in $H^n(\mathcal {O}_X)$. To prove the
Lemma, it will suffice to show that $M$ is equal to the kernel $I_GH^n(\mathcal{O}_X)$
of the surjection $H^n(\mathcal{O}_X) \to H^n(\mathcal{O}_X)_G$, where $I_G$ is the augmentation
ideal of $\mathbb{Z}[G]$.  

Since $Tr_{*} \circ (1-h)  =0$ for $h \in G$, we have $I_G  H^n(\mathcal{O}_X) \subset M$.
To show $M \subset I_G  H^n(\mathcal{O}_X)$, we first observe that $\mathcal{L}=I_{G}\pi_{*}(\mathcal{O}_{X})$ since
$\pi_* (\mathcal{O}_X)$ is a locally free rank one $\mathcal{O}_Y[G]$-module.  Let $\mathcal{E}$
be the kernel of the natural morphism
$$\oplus_{h\in G}(1-h): \oplus_{h\in G}\pi_{*}(\mathcal{O}_{X})\mapsto I_{G}\pi_* \mathcal{O}_{X}=\mathcal{L}.$$
Since $H^{n+1}(\mathcal{E})=0$,  this morphism induces a surjective map
$\tau:\oplus_{h\in G}H^{n}(\pi_{*}(\mathcal{O}_{X}))\longrightarrow{}{} H^{n}(\mathcal{L})$.  Therefore the image $M$ of
$ H^{n}(\mathcal{L})$  in $H^n(\mathcal {O}_X)$ is contained in the image of the composition of $\tau$
with the homomorphism $H^{n}(\mathcal{L})\rightarrow H^{n}(\mathcal{O}_{X})$.  The latter composition
is the map on cohomology induced by the homomorphism $\oplus_{h \in G}(1-h): \oplus_{h \in G} 
\pi_{*}(\mathcal{O}_{X})\rightarrow \pi_{*}(\mathcal{O}_{X})$.  Since $I_G$ is additively generated by $1-h$ as
$h$ ranges over $G$, this shows that $M\subset I_G  H^n(\mathcal{O}_X)$, which completes the proof. $\Box$

\bigskip
 \noindent  We  associate to   the exact sequence
\begin{equation}
\label{eq:exact}
0 \to H^0(\mathcal{O}_X) \to P_0 \to \cdots \to P_n \to H^n(\mathcal{O}_X) \to 0
\end{equation}
the following commutative diagram:

\[\xymatrix{
P_{n-1, G}\ar[r]^{\partial}\ar[d]_{Tr_G}&P_{n, G}\ar[r]^{}
\ar[d]_{Tr_G}& H^n(\mathcal{O}_X)_G\ar[r]^{}\ar[d]_{Tr} &0\\
P_{n-1}\ar[r]^{\partial}&P_n\ar[r]^{}&
H^n(\mathcal{O}_X)\ar[r]^{}&0\\
}\]
here $Tr$ denotes the composite of the isomorphism $Tr_{*}:H^{n}(\mathcal{O}_{X})_{G}\rightarrow H^{n}(O_{Y}) $ and the identification $\iota_*: H^n( \mathcal{O}_{Y})\rightarrow H^n( \mathcal{O}_{X})^G \subset{H^n( \mathcal{O}_{X})} $; thus

\[\xymatrix{
0\ar[r]^{}&\partial(P_{n-1, G})\ar[r]^{}\ar[d]_{Tr_G}&P_{n, G}\ar[r]^{}
\ar[d]_{Tr_G}& H^n(\mathcal{O}_X)_G\ar[r]^{}\ar[d]_{Tr} &0\\
0\ar[r]^{}&\partial(P_{n-1})\ar[r]^{}&P_n\ar[r]^{}&
H^n(\mathcal{O}_X)\ar[r]^{}&0.\\
}\]
Since the $\bar k[G]$-modules $P_l$ are all projective,  the map $Tr_G$ induces and isomorphism from $P_{l,G}\simeq P^G_l$. Moreover, it follows from  Lemma 4.4 
that $L$ identifies with the kernel of $Tr_*$. By using  the Snake lemma we obtain  an exact sequence:
\begin{equation}0\rightarrow L\rightarrow \frac {\partial(P_{n-1})}{\partial(P^G_{n-1})}\rightarrow\frac{P_n}{P_n^G}\rightarrow \frac{H^n(\mathcal{O}_X)}{Tr(H^n(\mathcal{O}_X))}\rightarrow 0\end{equation}
and hence an isomorphism of $\bar k$-vector spaces
\begin{equation}L\simeq \frac{\partial(P_{n-1})\cap P_n^G}{\partial(P_{n-1}^G)} .\end{equation}
Since the $\bar k[G]$-modules $P_0, ...,P_n$ are all projective, and hence injective, we can extend the complex
$$0\rightarrow P_0\rightarrow...\rightarrow P_n$$
to an injective resolution $P^{\bullet}$ of $H^0(\mathcal{O}_X)=\bar k$. It follows from (4.6)  that we have the following isomorphisms:
$$L\simeq H^n(\mbox {Hom}_{\bar k [G]}(\bar k, P^{\bullet})\simeq \mbox {Ext}^n_{\bar k[G]}(\bar k, \bar k)\simeq H^n(G, \bar k) . $$
In view of Lemma 4.4  we can identify $L'$ with  the $\bar k$-vector space $ \frac{H^n(\mathcal{O}_X)^G}{Tr_G(H^n(\mathcal{O}_X))}$. This leads us to the exact sequence
$$P_n^G\rightarrow (\frac{P_n}{\partial(P_{n-1})})^G\rightarrow L'\rightarrow 0 .$$
 Using  the complex $P^{\bullet}$ once again, we deduce from the previous sequence that
\begin{equation} L'\simeq \frac{\partial(P_n)\cap P^G_{n+1}}{\partial(P_n^G)}\simeq H^{n+1}(G, \bar k) .
\end{equation}

\subsection { Proof of Proposition 3.4}\ 

We recall that $C$ is a $p$-Sylow subgroup of $G$. 

\begin{lemma}
\label{lem:module}
Define $M(n)$ to be the $\K [C]$-module given by
$\K$ with trivial $C$-action if $n$ is odd and by the quotient  $\K [C]/(\K \cdot \mathrm{Tr}_C)$ if $n$ 
is even, where $\mathrm{Tr}_C = \sum_{\sigma \in C} \sigma$ is the trace element of $\K [C]$.  
There is a $\K [C]$-module isomorphism
$$H^n(\O_X) = M(n) \oplus M$$
in which $M$ is a finitely generated
free $\K [C]$-module.  
\end{lemma}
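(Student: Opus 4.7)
The plan is to reduce the claim to a calculation in the stable category of $\K[C]$-modules, using the partial projective resolution of $\K$ provided by Lemma~\ref{lem:coherent} together with the fact that $\K[C]$ is a self-injective algebra whose stable category is $2$-periodic at the trivial module.

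First I would restrict the exact sequence
$$0 \to H^0(\O_X)=\K \to P_0 \to \cdots \to P_n \to H^n(\O_X) \to 0$$
of Lemma~\ref{lem:coherent} from $\K[G]$ to $\K[C]$. Since $[G:C]$ is prime to $p$, the algebra $\K[G]$ is free as a right $\K[C]$-module, so each $P_i$ remains a finitely generated projective (and hence injective) $\K[C]$-module. The sequence above then exhibits $H^n(\O_X)$ as the $(n+1)$-th Heller translate $\Omega^{-(n+1)}(\K)$ of the trivial module in the stable category of finitely generated $\K[C]$-modules.

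Next I would compute these Heller translates explicitly. Writing $t=\sigma-1$ for a generator $\sigma$ of $C$, one has $\K[C]=\K[t]/(t^{\# C})$, and in characteristic $p$ a direct binomial calculation gives $\mathrm{Tr}_C=t^{\# C-1}$. The augmentation sequence
$$0 \to I_C \to \K[C] \to \K \to 0$$
identifies $\Omega(\K)$ with $I_C=t\K[C]$, and multiplication by $t$ provides a $\K[C]$-isomorphism $\K[C]/\K\!\cdot\!\mathrm{Tr}_C\loniso I_C$, since the annihilator of $t$ in $\K[C]$ is exactly $\K\!\cdot\!\mathrm{Tr}_C$. Hence $\Omega(\K)\cong \K[C]/\K\!\cdot\!\mathrm{Tr}_C$. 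The kernel of the surjection $\K[C]\to\K[C]/\K\!\cdot\!\mathrm{Tr}_C$ is $\K\!\cdot\!\mathrm{Tr}_C\cong\K$, so $\Omega^2(\K)\cong\K$. Because $\K[C]$ is self-injective one has $\Omega^{-1}=\Omega$ in the stable category, and combining these facts yields
$$\Omega^{-(n+1)}(\K)\cong\begin{cases}\K & \text{if $n$ is odd,}\\ \K[C]/\K\!\cdot\!\mathrm{Tr}_C & \text{if $n$ is even,}\end{cases}$$
which in both cases equals $M(n)$.

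Finally, the stable isomorphism $H^n(\O_X)\cong M(n)$ means that $H^n(\O_X)\oplus F\cong M(n)\oplus F'$ for some finitely generated free $\K[C]$-modules $F$ and $F'$. Because $\K[C]$ is a local Artinian ring, Krull--Schmidt holds for finitely generated $\K[C]$-modules; since $M(n)$ is a nonzero indecomposable module whose $\K$-dimension is strictly less than $\# C$, it is not isomorphic to the unique indecomposable projective $\K[C]$. Cancelling a common free summand then produces the desired decomposition $H^n(\O_X)\cong M(n)\oplus M$ with $M$ finitely generated and free. The only delicate points are the $\K[C]$-isomorphism $I_C\cong\K[C]/\K\!\cdot\!\mathrm{Tr}_C$ and the parity bookkeeping for $\Omega^{-(n+1)}(\K)$; everything else is immediate from Lemma~\ref{lem:coherent} and the self-injectivity of $\K[C]$.
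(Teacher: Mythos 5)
Your proof is correct and follows essentially the same route as the paper's: the paper likewise works over the local Artinian ring $\K[C]$, where projectives are free and injective, and obtains the result from the sequence of Lemma \ref{lem:coherent} by ``induction on $n$'' --- which is exactly the cosyzygy computation you have packaged as the $2$-periodicity of the Heller translates of $\K$. Your write-up just makes explicit the identification $\mathrm{Tr}_C=(\sigma-1)^{\#C-1}$ and the Krull--Schmidt cancellation that the paper leaves implicit.
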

\begin{proof}  Since $C$ is a  $p$-group and $\K$ is of characteristic $p$, the ring $\K [C]$ is local and artinian and hence every projective $\K [C]$-module is
free and injective.  The result now follows from the existence of
the sequence (\ref{eq:exact}) together with induction on $n$.
\end{proof}
\vskip 0.1 truecm
 Now we are using the notations of Proposition 3.3. 
 We have an isomorphism of $\bar k$-vector spaces
$$Ext_{\bar k[C]} ^{n+1} (F, \bar k)\simeq H^{n+1}(C, \mbox{Hom}_{\bar k}(F, \bar k))\  $$
  for any $\bar k [C]$-module $F$, where  $\mbox{Hom}_{\bar k}(F, \bar k) $ is endowed with the  $\bar k [C]$-module structure given by
$$cf: m\rightarrow f(c^{-1}m)\ \quad \forall c\in C, \quad \forall f \in \mbox{Hom}_{\bar k}(F, \bar k) . $$
 Since $C$ is cyclic and $n$ is odd,
$$H^{n+1}(C, \mbox{Hom}_{\bar k}(F, \bar k))\simeq \mbox{Hom}_{\bar k}(F, \bar k)^C/Tr_{C}(\mbox{Hom}_{\bar k}(F, \bar k))\ .$$
This leads us to  identify $Ext_{\bar k[C]} ^{n+1}(F, \bar k)$ and $ \mbox{Hom}_{\bar k }(F, \bar k)^C/Tr_{C}(\mbox{Hom}_{\bar k}(F, \bar k))$.
Under these  identifications  the map
$$\alpha: Ext_{\bar k[C]} ^{n+1}(H^{n}(\mathcal{O}_{Y}), \bar k)\rightarrow Ext_{\bar k[C]} ^{n+1}(H^{n}(\mathcal{O}_{X}), \bar k)\ $$  is induced by
$$\mbox{Hom}_{\bar k}(H^{n}(\mathcal{O}_{Y}), \bar k)\rightarrow \mbox{Hom}_{\bar k}(H^{n}(\mathcal{O}_{X}), \bar k)^C$$

$$f\mapsto f\circ Tr_* $$
while the map  
$$\beta: Ext_{\bar k[C]} ^{n+1}(H^{n}(\mathcal{O}_{Y}), \bar k)\rightarrow Ext_{\bar k[C]} ^{n+1}(\K\otimes_k W, \bar k) $$
 is the restriction map
$$\mbox{Hom}_{\bar k}(H^{n}(\mathcal{O}_{Y}), \bar k)\rightarrow \mbox{Hom}_{\bar k}(L, \bar k) . $$
Since, by hypothesis, $L$ is a $\bar k$-sub-vector space of $H^n (\mathcal{O}_Y)$ of dimension one,  the $\bar k$-linear map $\beta$ is
clearly surjective and $\mbox{Ker}(\beta)$ is a subvector space  of $Ext_{\bar k[C]} ^{n+1}(H^{n}(\mathcal{O}_{Y}), \bar k)$
of codimension one.

\noindent Let   $f\in \mbox{Hom}_{\bar k}(H^{n}(\mathcal{O}_{Y}), \bar k)$  be an element of  $\mbox{Ker}(\alpha)$. Then   there exists
$h\in \mbox{Hom}_{\bar k}(H^n(\mathcal{O}_X, \bar k)$ such that $f\circ Tr_*=Tr_C(h)$. Let $q$ be the index of $C$ in $G$ and let
$\{\tau \in  S\}$ be a set of representatives of $G/C$. For any $x\in H^n(\mathcal{O}_X)$ we  have the equalities:
\begin{equation}
q(f\circ Tr_*(x))=f\circ Tr_*(\sum_{\tau\in S}\tau x)=\sum_{c\in C}h(c\sum_{\tau\in S}(\tau x))=h(Tr_G(x)).
\end{equation}
 It  follows  from  Lemma 4.4  that any element $a$ in $L$   can be written $a=Tr_*(x)$, with $Tr_Gx=0$. Therefore we deduce from (4.8) that for every $a\in L$
$$qf(a)= q(f\circ Tr_*(x))=h(Tr_Gx)=0 $$
and thus that $f(a)=0$.    We conclude that  $f\in \mbox{Ker}(\beta)$. Hence we have proved that $\mbox{Ker}(\alpha)$ is contained in $\mbox{Ker}(\beta)$ and so that $1\leq \mbox{codim}(\mbox
{Ker}(\alpha)$. We now observe that to complete the proof of the proposition it suffices to prove that 
$$\mbox{Hom}_{\bar k}(H^n(\mathcal{O}_X), \bar k)^C/Tr_{C}(\mbox{Hom}_{\bar k}(H^n(\mathcal{O}_X), \bar k))$$ is
a $\bar k$-vector space of dimension one. As a consequence we  will obtain that   $\mbox{codim}(\mbox
{Ker}(\alpha))\leq 1$ and we thereby deduce the equality $\mbox{Ker}(\alpha)$ and  $\mbox{Ker}(\beta)$ and hence  the surjectivity of $\alpha$.

\noindent Since $n$ is odd we deduce from  Lemma 4.5  that  there exists  a $\bar k [C]$-module isomorphism
$$H^n(\mathcal{O}_X)=\bar k\oplus M $$ where $M$ is a finitely generated free $\bar k [C]$-module. The $\bar k$-vector space $ \mbox{Hom}_{\bar k}(H^n(\mathcal{O}_X), \bar k)$ splits into a direct sum
\begin{equation}\mbox{Hom}_{\bar k}(H^n(\mathcal{O}_X), \bar k)=\mbox{Hom}_{\bar k}(\bar k, \bar k)\oplus \mbox{Hom}_{\bar k}(M, \bar k)=\bar k \oplus\mbox{Hom}_{\bar k}(M, \bar k) .\end{equation}
Therefore
\begin{equation}
\mbox{Hom}_{\bar k}(H^n(\mathcal{O}_X), \bar k)^C=\bar k\oplus \mbox{Hom}_{\bar k}(M, \bar k)^C
\end{equation}
while
\begin{equation}
Tr_C(\mbox{Hom}_{\bar k_C}(H^n(\mathcal{O}_X), \bar k))=\{0\}\oplus Tr_C(\mbox{Hom}_{\bar k}(M, \bar k).
\end{equation}
Since $M$ is  a free $\bar k[C]$-module one easily checks that $\mbox{Hom}_{\bar k}(M, \bar k)$ is  $\bar k [C]$-free and thus that
$\mbox{Hom}_{\bar k}(M, \bar k)^C=Tr_C(\mbox{Hom}_{\bar k}(M, \bar k)$. Hence we have proved that
$$\mbox{Hom}_{\bar k}(H^n(\mathcal{O}_X), \bar k)^C/Tr_C(\mbox{Hom}_{\bar k}(H^n(\mathcal{O}_X), \bar k)\simeq \bar k$$
as required. $\Box $
\vskip 0.1 truecm
\noindent {\bf Remark.}  One should note that the conclusions of Proposition 3.4 are incorrect when $n$ is even. 
\vskip 0.2 truecm 

\subsection { Proof of Proposition 3.5.}\ 

 For the sake of simplicity in the proof we identify  $H^0(\mathcal{O}_X)$ and $\bar k$ and $W'\otimes_k\K$ with $L'$. 
 
 Since $C$ is cyclic and $n$ is even  we have  isomorphisms
\begin{equation} \mbox{Ext}^{n+1}_{\bar k [C]}(L',\bar k)\simeq \hat H_{-1}(C, \mbox{Hom}_{\bar k}(L', \bar k))=\mbox{Hom}_{\bar k}(L', \bar k)_{Tr_C}/(c-1)(\mbox{Hom}_{\bar k}(L', \bar k))
=\mbox{Hom}_{\bar k}(L', \bar k) \end{equation}
where, for  a $\bar k[C]$-module $Z$,  we let $Z_{Tr_C}$ denote the submodule which is annihilated by $Tr_C$.
By a similar argument we obtain the isomorphism

\begin{equation}\mbox{Ext}^{n+1}_{\bar k [C]}(H^n(\mathcal{O}_X)^G,\bar k)\simeq \mbox{Hom}_{\bar k}(H^n(\mathcal{O}_X)^G, \bar k). \end{equation}
Since $L'=H^n(\mathcal{O}_X)^G/Tr_G(H^n(\mathcal{O}_X))$  is a $\bar k$-vector space of dimension one, it follows from (4.12) and (4.13) that
$\mbox{Ext}^{n+1}_{\bar k [C]}(L',\bar k)$ is of dimension one and that  the map 
\begin{equation}\mbox{Ext}^{n+1}_{\bar k [C]}(L',H^0(\mathcal{O}_X))  \rightarrow  \mbox{Ext}^{n+1}_{\bar k [C]}(H^n(\mathcal{O}_X)^G,H^0(\mathcal{O}_X))\end{equation} can be identified  with the homomorphism
$$\mbox{Hom}_{\bar k}(H^n(\mathcal{O}_X)^G/Tr_G(H^n(\mathcal{O}_X)), \bar k))\rightarrow \mbox{Hom}_{\bar k}(H^n(\mathcal{O}_X)^G, \bar k))$$
obtained by composing an element of $\mbox{Hom}_{\bar k}(H^n(\mathcal{O}_X)^G/Tr_G(H^n(\mathcal{O}_X)), \bar k))$ with the natural surjection
$H^n(\mathcal{O}_X)^G\rightarrow H^n(\mathcal{O}_X)^G/Tr_G(H^n(\mathcal{O}_X)$. This shows that (4.14) is an injective map which identifies
$\mbox{Ext}^{n+1}_{\bar k [C]}(L',\bar k)$ with
\begin{equation} \{f\ \in \mbox{Hom}_{\bar k}(H^n(\mathcal{O}_X)^G, \bar k))\  \mbox {such that }\ f |Tr_G(H^n(\mathcal{O}_X))=0\}.\end{equation}

Since $n$ is even, according to Lemma 4.5  we can decompose $H^n(\mathcal{O}_X)$ into a direct sum of $\bar k [C]$-modules
$$H^n(\mathcal{O}_X)=M(n)\oplus M$$
where $M(n)=\frac {\bar k[C]}{\bar kTr_C}$ and $M$ is a free $\bar k [C]$-module.
This implies   the following decompositions into direct sums of $\bar k$-vector spaces:
\begin{equation} \mbox{Ext}^{n+1}_{\bar k [C]}(H^n(\mathcal{O}_X), \bar k)=\mbox{Ext}^{n+1}_{\bar k [C]}(M(n), \bar k)\oplus \mbox{Ext}^{n+1}_{\bar k [C]}(M, \bar k)=\mbox{Ext}^{n+1}_{\bar k [C]}(M(n), \bar k)\oplus\{0\}
\end{equation}
It now follows from (4.16)  that
\begin{equation}
\mbox{Ext}^{n+1}_{\bar k [C]}(H^n(\mathcal{O}_X), \bar k)=
\mbox{Hom}_{\bar k}(\frac {\bar k[C]}{\bar kTr_C}, \bar k) /(c-1)\mbox{Hom}_{\bar k}(\frac {\bar k[C]}{\bar kTr_C}, \bar k).
\end{equation}
The dimension over  $\bar k$ of the right-hand side of this equality is the dimension of the kernel of the multiplication by $(c-1)$ on
$ \mbox{Hom}_{\bar k}(\frac {\bar k[C]}{\bar kTr_C}, \bar k)$. The kernel of the multiplication by $(c-1)$ naturally identifies  with the vector space
$\mbox{Hom}_{\bar k}(M(n)_C, \bar k)$. One easily checks that
\begin{equation}M(n)_C\simeq \frac {\bar k [C]}{(c-1)\bar kC}\simeq \bar kTr_C. \end{equation}  Hence we conclude that  $\mbox{Hom}_{\bar k}(M(n)_C, \bar k)$ and thus
$\mbox{Ext}^{n+1}_{\bar k [C]}(H^n(\mathcal{O}_X), \K))$ are of dimension one.   We also have an isomorphism
\begin{equation}
\mbox{Ext}^{n+1}_{\bar k [C]}(H^n(\mathcal{O}_X),\bar k)\simeq \hat H_{-1}(C, \mbox{Hom}_{\bar k}(H^n(\mathcal{O}_X), \bar k))=\mbox{Hom}_{\bar k}(H^n(\mathcal{O}_X), \bar k)_{Tr_C}/(c-1)(\mbox{Hom}_{\bar k}(H^n(\mathcal{O}_X), \bar k)). \end{equation}
Therefore we deduce that the map  
\begin{equation}
 \mbox{Ext}^{n+1}_{\bar k [C]}(H^n(\mathcal{O}_X),H^0(\mathcal{O}_X)) \rightarrow  \mbox{Ext}^{n+1}_{\bar k [C]}(H^n(\mathcal{O}_X)^G,H^0(\mathcal{O}_X))\end{equation} 
 is induced by the restriction homomorphism 
\begin{equation}\mbox{Hom}_{\bar k}(H^n(\mathcal{O}_X), \bar k)_{Tr_C}\rightarrow \mbox{Hom}_{\bar k}(H^n(\mathcal{O}_X)^G, \bar k)).\end{equation}
We note that for any $x\in  H^n(\mathcal{O}_X)$ there exists $x'$ such that $Tr_G(x)=Tr_C(x')$. This shows that
$Tr_G(H^n(\mathcal{O}_X))\subset  H^n(\mathcal{O}_X)^G\cap Tr_C(H^n(\mathcal{O}_X))$. Conversely, let  $\alpha=Tr_C(x)$ be an element of $H^n(\mathcal{O}_X)^G\cap Tr_C(H^n(\mathcal{O}_X))$ and let $\{g_i, 1\leq i\leq q\}$ be a set of representatives of $G/C$. We have the equalities
$$q\alpha=\sum_{1\leq i\leq q}g_iTr_C(x)=Tr_G(x).$$
This shows that $q\alpha$, and hence also $\alpha$, belongs to $Tr_G(H^n(\mathcal{O}_X))$. We conclude that
\begin{equation}Tr_G(H^n(\mathcal{O}_X))=  H^n(\mathcal{O}_X)^G\cap  Tr_C(H^n(\mathcal{O}_X)) .\end{equation}
It follows from (4.15) and (4.22 ) that the image of the map (4.21) is contained  in the image of $\mbox{Ext}^{n+1}_{\bar k [C]}(L',\bar k)$. Since $\mbox{Ext}^{n+1}_{\bar k [C]}(H^n(\mathcal{O}_X), \bar k)$ is of dimension one,  in order to complete the proof of the proposition,  it suffices to prove that the map (4.20) is not the zero map.  Let $f$ be a non-zero  element of
$ \mbox{Hom}_{\bar k}(H^n(\mathcal{O}_X)^G, \bar k))$,  trivial on $Tr_G(H^n(\mathcal{O}_X)) $, and let $x$ be an element of $H^n(\mathcal{O}_X)^G$ such that
$f(x)\neq 0$. In view of (4.22) we know that $x$ does not belong to  $Tr_C(H^n(\mathcal{O}_X))$. Let $V$ be a subvector space of $H^n(\mathcal{O}_X)$, containing $Tr_C(H^n(\mathcal{O}_X))$, and such that
$$H^n(\mathcal{O}_X)=V\oplus \bar k x .$$
Let $g$ be the element of  $ \mbox{Hom}_{\bar k}(H^n(\mathcal{O}_X), \bar k))$ defined by $g|V=0$ and $g(x)=f(x)$. It follows from (4.22) that the restriction of $g$ to
$Tr_G(H^n(\mathcal{O}_X))$ is trivial and therefore that $g|H^n(\mathcal{O}_X)^G=f$. This proves that,  as required, (4.20) is not the zero map. $\Box$

\section {Examples}
Our goal is to provide examples of smooth projective varieties of arbitrary large dimension, defined over an algebraically closed field of
characteristic $p$, endowed with the action of a cyclic group of order $p$,  which fulfills  Hypothesis 3.1.

Let $p> 2$ be a prime and let $\K$ be an algebraically closed field of
characteristic $p$. Define $G$ to be a cyclic group of order $p$
with generator $\sigma$. We fix an action of $G$ on the projective
space $\mathbb{P}^{p-1}_{\K}$ by letting $\sigma$ send the
point $x = (x_0:x_1:x_2\cdots:x_{p-1})$ to $\sigma(x) = (x_1:x_2: x_1:\cdots:x_0)$.

\begin{thm}
\label{thm:main}
Let $X$ be the closed subscheme of $\mathbb{P}_{\K}^{p-1}$ defined by the 
homogeneous polynomial $f_{p}=X_{0}...X_{p-1}+ X_{0}^{p-1}X_{1}+ X_{1}^{p-1}X_{2}+...+X_{p-1}^{p-1}X_{0}$.  Then $X$ 
 is a smooth irreducible hypersurface  of $ \mathbb{P}_{\K}^{p-1}$
of degree $p$ on which $G$ acts without fixed points.  The quotient morphism $X \to Y = X/G$ 
is \'etale, and  $Y$ is smooth, irreducible and 
projective of dimension $n = \mathrm{dim}(X) = p-2$. 
\end{thm}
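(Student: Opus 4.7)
The plan is to establish the conclusions in the natural order: first verify that the action of $G$ preserves $X$ and is free; then prove smoothness via the Jacobian criterion, which is the technical heart; and finally deduce irreducibility, dimension, and the properties of $Y$ from standard arguments.

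The polynomial $f_p$ is visibly invariant under the cyclic permutation $X_i \mapsto X_{i+1}$, so $G$ stabilizes $X$. A fixed point of $\sigma$ in $\mathbb{P}^{p-1}_{\K}$ is represented by an eigenvector of the cyclic shift; its characteristic polynomial $T^p-1 = (T-1)^p$ in characteristic $p$ admits only the eigenvalue $1$, with eigenspace spanned by $(1,1,\ldots,1)$. The unique fixed point in projective space is therefore $(1{:}1{:}\cdots{:}1)$, and $f_p(1,\ldots,1) = 1 + p = 1 \neq 0$ in $\K$, so this point lies outside $X$ and $G$ acts freely.

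For smoothness I would apply the Jacobian criterion. Writing $e = \prod_j x_j$ and $b_k = x_k^{p-1} x_{k+1}$ (indices mod $p$), the partial derivatives are
\[
\partial_k f_p = \prod_{j\neq k} X_j + X_{k-1}^{p-1} - X_k^{p-2} X_{k+1},
\]
and multiplying by $X_k$ rewrites the vanishing of $\partial_k f_p$ at a putative singular point $x$ as $b_k - b_{k-1} = e$ for every $k$. Iterating cyclically gives $b_k = b_0 + ke$; substituting into $f_p(x) = e + \sum_k b_k = 0$ and using that $p$ is odd to kill both $pb_0$ and $\binom{p}{2}e$ in characteristic $p$ forces $e = 0$. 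Hence some coordinate vanishes, and by the $G$-symmetry I may assume $x_0 = 0$. Then $b_{p-1} = x_{p-1}^{p-1}x_0 = 0$ together with $b_k = b_{k-1}$ gives $b_k = 0$ for all $k$. Reading off $\partial_k f_p(x) = 0$ for $k = p-1, p-2, \ldots, 2$ in that order, where at each step the $\prod_{j\neq k}$ summand vanishes because $x_0 = 0$ and the $X_k^{p-2} X_{k+1}$ summand vanishes by the previous step, produces a cascade $x_{p-2} = x_{p-3} = \cdots = x_1 = 0$; finally $\partial_0 f_p(x) = 0$ collapses to $x_{p-1}^{p-1} = 0$, so every coordinate vanishes, contradicting that $x$ represents a projective point. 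Hence $X$ is smooth of dimension $p-2 = n$.

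With smoothness in hand, the short exact sequence $0 \to \mathcal{O}(-p) \to \mathcal{O} \to \mathcal{O}_X \to 0$ on $\mathbb{P}^{p-1}$ together with $H^0(\mathbb{P}^{p-1},\mathcal{O}(-p)) = 0 = H^1(\mathbb{P}^{p-1},\mathcal{O}(-p))$ (the latter using $p-1 \geq 2$) yields $H^0(X,\mathcal{O}_X) = \K$, so $X$ is connected and hence irreducible. Because $G$ acts freely on the smooth projective variety $X$, the quotient $Y = X/G$ exists as a smooth projective variety (constructed, for example, by descending a $G$-linearized ample line bundle), the morphism $\pi$ is \'etale of degree $p$, and $Y$ inherits irreducibility and dimension $n$ from $X$. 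I expect the main obstacle to be cleanly organizing the cascade argument for smoothness, especially the initial reduction via $f_p(x)=0$ to the case where some coordinate vanishes.
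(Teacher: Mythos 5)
Your proof is correct, and the smoothness and fixed-point parts coincide with the paper's: the same Jacobian computation (multiplying $\partial_k f_p$ by $X_k$ to get $b_k-b_{k-1}=e$, summing against $f_p(x)=0$ and using that $p$ is odd to force $e=0$, then cascading to kill all coordinates), and the same observation that $(1{:}1{:}\cdots{:}1)$ is the unique fixed point of the cyclic shift on $\mathbb{P}^{p-1}_{\K}$ and satisfies $f_p=1$. In fact you spell out the cascade that the paper dismisses with ``one easily checks.'' Where you genuinely diverge is irreducibility. The paper proves directly that the polynomial $f_p$ is irreducible, \emph{before} addressing smoothness, by a combinatorial analysis of possible factorizations: a putative irreducible factor $g$ of degree $<p$ either satisfies $\sigma(g)=g$ (ruled out because a $\sigma$-invariant polynomial of degree $<p$ is a trace, hence vanishes at $(1,\dots,1)$, while $f_p(1,\dots,1)=1$) or has a free $\sigma$-orbit, forcing $f_p$ to be a product of $p$ linear forms, which is excluded by inspecting which monomials occur. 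You instead get irreducibility for free after smoothness, from connectedness via $H^0(X,\mathcal{O}_X)=\K$ (using $0\to\mathcal{O}(-p)\to\mathcal{O}\to\mathcal{O}_X\to 0$ and $H^0=H^1=0$ for $\mathcal{O}(-p)$ when $p-1\ge 2$) together with the fact that a connected smooth scheme is irreducible; this is valid, since the empty common zero locus of $f_p$ and its partials makes the scheme $V(f_p)$ smooth, hence reduced, independently of any prior knowledge that $f_p$ is squarefree. Your route is shorter and less error-prone than the paper's case analysis, and it reuses the ideal-sheaf sequence that the paper in any case needs for Corollary 5.2; the paper's route yields the slightly stronger, purely algebraic statement that $f_p$ is irreducible without invoking cohomology.
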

\begin{proof}
Suppose that $f_{p}$ is reducible. Let $g$ be an irreducible, homogeneous 
divisor of $f_{p}$ of degree $m$ with $m<p$. We write $f_{p}=gh$. Since $\sigma(f_{p})=f_{p}$ then $\sigma(g)$ divides 
$f_{p}$ and thus  either $\sigma(g)$ divides $g$ or $\sigma(g)$ divides $h$. If 
$\sigma(g)$ divides $g$ there exists $\lambda \in \K^{*}$ such that $\sigma (g)=\lambda g$. Since $\sigma $ is of ordre $p$ we deduce that 
$\lambda^{p}=1$  and so  $\lambda=1$ and $\sigma(g)=g$. If $\sigma(g)\not\neq g$ then $\sigma (g)$ divides $h$ and therefore 
$g\sigma(g)...\sigma^{p-1}(g)$ divides $f_{p}$. Since $deg(f_{p})=p$ the degree of  $g$ has to be  $1$.  Therefore either  $\sigma(g)=g$ 
or  $g$ is homogeneous of degree $1$ and $f_{p}=g\sigma(g)...\sigma^{p-1}(g)$. For any polynomial $u$ we write  
$Tr(u)=\sum_{0\leq i\leq p-1}\sigma^{i}(u)$. Since there is no  monomial polynomial of degree $<p$  invariant by sigma we conclude 
that if  $g$ is of degree $<p$ and such that $\sigma(g)=g$  there exists some polynomial $u$ with  $g=Tr(u)$. Since $\K$ is of characteristic 
$p$ then  $g(1)=Tr(u)(1, ..., 1)=0$ which is impossible since $f_{p}(1, ...1)=1$. We now suppose that $f_{p}=g\sigma(g)...\sigma^{p-1}(g)$ with 
$g=a_{0}X_{0}+...+a_{p-1}X_{p-1}$. Since  $f_p$  doesn't contain any monomial   of the type $X_{i}^{p}$  at least 
one of the $a_{i}$'s has to be equal to $0$. Suppose for instance  that $a_{0}=0$. Then the coefficient of  $X_{0}^{p-1}X_{1}$ in $g\sigma(g)...\sigma^{p-1}(g)$  is  equal to $a_{1}^{2}a_{2}...a_{p-1}$. This implies that $a_{i}\not\neq 0$  for $1\leq i\leq p-1$ . Thus 
we will get in this product  the monomial polynomials $a_{1}a_{2}^{2}...a_{p-1}X_{0}^{p-1}X_{2},...,a_{1}a_{2}...a_{p-1}^2 X_{0}^{p-1}X_{p-1}$. 
Since $f_{p}$ doesn't contain such monomials  we conclude that $f_{p}$ can't be decomposed into such a product and thus that $f_{p}$  is irreducible. 

Now we claim   that it doesn't exist $x=(x_0, x_1, ..., x_{p-1})\neq (0, 0, ..., 0)$ such that 
\begin{equation} f_{p}(x)=f'_{p, X_{0}}(x)=...f'_{p, X_{p-1}}(x)=0 . \end{equation}
Suppose $x=(x_0, x_1, ..., x_{p-1})$ satisfies (5.1). We have
\begin{equation}f'_{p, X_{i}}(x)=x_0...\hat {x_i}...x_{p-1}+x_{\sigma^{-1}(i)}^{p-1}-x_{i}^{p-2}x_{\sigma(i)}=0, \ \ 0\leq i\leq p-1\ . \end{equation}
Using these  equations  one easily checks that if of the $x_{i}'s$ is equal to $0$ then the others are. 
 Let us denote by $P$ the product 
 $x_{0}...x_{p-1}$. It follows from (5.2)  that 
 \begin{equation}x_{i}f'_{p, X_{i}}(x)=P+x_ix_{\sigma^{-1}(i)}^{p-1}-x_{i}^{p-1}x_{\sigma(i)}=0, \ \ 0\leq i\leq p-1\ .\end{equation}We deduce from (5.3) the equalities
 \begin{equation}x_i^{p-1}x_{i+1}=iP+x_0^{p-1}x_1,  \ \ 0\leq i\leq p-1, \end{equation}
 (by convention we set $x_p=x_0$).  
 Using (5.4) the equation 
 $$f_p(x)=P+\sum_{0\leq i\leq p-1}x_i^{p-1}x_{i+1}=0$$
 can be written
 \begin{equation}f_p(x)=P+\frac{p(p-1)}{2}P+px_0^{p-1}x_1=P=0 .\end{equation}
 Then one (and thus all) of the $x_i$'s has to be equal to zero. This achieves the proof of the claim. 
 
 Since $(1: 1:...:1)$ is the unique  fixed closed point of $G$ acting on $\mathbb P$ we conclude that as required $X$ 
 is a smooth irreducible hypersurface  of $ \mathbb{P}_{\K}^{p-1}$
of degree $p$ on which $G$ acts without fixed points. Therefore  $\pi: X\to Y=X/G$ is a finite and \'etale morphism and $Y$ is a smooth, irreducible and projective variety over $\K$.  
 \end{proof}

Since $G$ is a cyclic $p$-group the $\K$-vector spaces $H^n(G, \K)$ and $H^{n+1}(G, \K)$ are  of dimension 1 as seen before. Thus,  to show  that Hypothesis 3.1 is satisfied for   
$(X, G)$,   it suffices to prove the following corollary. 
\begin{cor}
\label{cor:nicecor}
 The 
variety $X$ is of dimension $n$ and  

\begin{enumerate}

\item[i) ]$H^j(\O_X) = \{0\}$ if and only if $j \not \in \{0,n\}$. 

\item[ii)] If $\K$ is an algebraic closure of $\mathbb{Z}/p$ there is a model
$Y_0$ of $Y$ over a finite field $k_0$ such that the composition
of $X \to Y$ and $Y \to Y_0$ is a Galois morphism $X \to Y_0$. 
\end{enumerate} 
\end{cor}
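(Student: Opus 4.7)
My plan for the corollary splits by its two assertions.

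For (i), I would use the fact that $X$ is a smooth degree-$p$ hypersurface in $\mathbb{P}^{p-1}_{\K}$ (Theorem \ref{thm:main}), yielding the short exact sequence
$$0 \to \mathcal{O}_{\mathbb{P}^{p-1}}(-p) \to \mathcal{O}_{\mathbb{P}^{p-1}} \to i_* \mathcal{O}_X \to 0$$
on $\mathbb{P}^{p-1}_{\K}$. The associated long exact sequence in cohomology, together with the standard computations $H^j(\mathbb{P}^{p-1}, \mathcal{O}_{\mathbb{P}^{p-1}}) = \K$ for $j = 0$ and zero otherwise, and (by Serre duality) $H^j(\mathbb{P}^{p-1}, \mathcal{O}_{\mathbb{P}^{p-1}}(-p)) = \K$ for $j = p-1$ and zero otherwise, immediately gives $H^0(\mathcal{O}_X) \cong \K$, $H^{p-2}(\mathcal{O}_X) \cong \K$, and $H^j(\mathcal{O}_X) = 0$ for $1 \le j \le p-3$; higher cohomology vanishes for dimension reasons since $\dim X = p - 2 = n$. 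This is exactly the non-vanishing pattern claimed.

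For (ii), the crux is that both the defining polynomial $f_p$ and the coordinate permutation $\sigma$ have $\mathbb{F}_p$-coefficients. I would take $k_0 = \mathbb{F}_p$ and set $X_0 := \mathrm{Proj}\,\mathbb{F}_p[T_0, \ldots, T_{p-1}]/(f_p)$ with the $G$-action induced by $\sigma$; then $X = \K \otimes_{\mathbb{F}_p} X_0$ as $G$-schemes. The equational arguments of Theorem \ref{thm:main} establishing irreducibility of $f_p$, smoothness, and the absence of geometric fixed points apply verbatim over $\mathbb{F}_p$, so $X_0$ is smooth projective and geometrically integral, $G$ acts freely on $X_0$, and $Y_0 := X_0/G$ is a smooth projective $\mathbb{F}_p$-variety with $X_0 \to Y_0$ finite étale Galois of group $G$. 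Base change to $\K$ recovers $Y = \K \otimes_{\mathbb{F}_p} Y_0$.

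It remains to check that $\tilde{\pi} : X \to Y_0$ is Galois. The composite factors as $X \to X_0 \to Y_0$, in which the first map is the pro-étale cover corresponding to the infinite Galois extension $\K/\mathbb{F}_p$ with group $\mathrm{Gal}(\K/\mathbb{F}_p) \cong \hat{\mathbb{Z}}$, and the second is finite étale $G$-Galois. Since the $G$-action is defined over $\mathbb{F}_p$, the two Galois actions are compatible and assemble into a pro-Galois cover with group $G \rtimes \hat{\mathbb{Z}}$ --- the structure anticipated by Lemma \ref{lem:conshyp}. The main substantive step in this corollary is part (i); the descent argument in (ii) is essentially automatic once one observes that $f_p$ and $\sigma$ come from $\mathbb{F}_p$ by construction.
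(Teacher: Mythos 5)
Your part (i) is exactly the paper's argument: the ideal-sheaf sequence for the degree-$p$ hypersurface, the standard cohomology of $\mathcal{O}_{\mathbb{P}}(m)$, and the long exact sequence. Your part (ii), however, takes a genuinely different and more concrete route. The paper does not use the fact that $f_p$ and $\sigma$ are defined over the prime field; instead it argues softly: choose \emph{any} model $Y_1$ of $Y$ over a finite field $k_1$ with constant field $k_1$, embed $\K(X)$ in a separable closure of $\K(Y)$, observe that the stabilizer of the finite extension $\K(X)$ in $\mathrm{Gal}(\K(Y)^s/k_1(Y_1))$ is open and hence contains a power $F_1^m$ of the Frobenius, and then take $k_0$ to be the fixed field of $F_1^m$ and $Y_0 = k_0\otimes_{k_1}Y_1$. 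Your argument instead descends $X$ itself to $X_0 = \mathrm{Proj}\,\mathbb{F}_p[T_0,\dots,T_{p-1}]/(f_p)$ with its $\mathbb{F}_p$-rational $G$-action, sets $Y_0 = X_0/G$ over $k_0=\mathbb{F}_p$, and notes that $\K(X)$ is the compositum of the two Galois extensions $\K(Y)$ and $\mathbb{F}_p(X_0)$ of $\mathbb{F}_p(Y_0)$, hence Galois with group $G\rtimes\hat{\mathbb{Z}}$; this is correct (one only needs that $X_0$ is geometrically connected so that $X_0\to Y_0$ is a connected $G$-Galois cover, and smoothness of $X_0$ descends from $X$). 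What each buys: your version is more elementary and pins down $k_0=\mathbb{F}_p$ explicitly, but it relies on the accident that the equations and the action live over the prime field; the paper's version is model-independent and is the one that also applies to its second family of examples (the elliptic curves of Proposition 5.4), where no such rational model is given.
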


\begin{proof}
For simplicity, we will write $\mathbb{P} = \mathbb{P}^{p-1}_{\K}$.  The ideal sheaf $I_X$
of $X$ is isomorphic to $\mathcal{O}_{\mathbb{P}}(-p)$ since
$X$ is a hypersurface of degree $p$.  We thus have an exact sequence
\begin{equation}
\label{eq:Hseq}
0 \to \mathcal{O}_{\mathbb{P}}(-p) \to \mathcal{O}_{\mathbb{P}} \to i_*\mathcal{O}_{X} \to 0
\end{equation} where $i$ is the closed immersion $X\to \mathbb{P}$. Let  $S_m$
be the $\K$-vector space of homogeneous degree $m$ polynomials in the homogeneous coordinate
functions $X_0,\ldots,X_{p-1}$ (if $m<0$ we set  $S_m=0$). 
>From \cite[Lemma 3.1]{Liu} we have, for any $m \in \mathbb{Z}$ 
$$H^j(\mathbb{P}, O_\mathbb{P}(m)) = 0 \quad \mathrm{if}\quad  j \neq 0, p-1$$
and 
$$H^0(\mathbb{P},O_{\mathbb{P}}(m)) = S_m \quad \mathrm{and}\quad  H^{p-1}(\mathbb{P}, O_\mathbb{P}(m)) =H^0(\mathbb{P},O_\mathbb{P}(-m-p))^\vee$$
where $L^\vee = \mathrm{Hom}(L,\K)$ if $L$ is a $\K$-vector space.  Using this in the long
exact cohomology sequence associated to (\ref{eq:Hseq}) shows part (i),  after noting,  for any integer $m$,  the equality 
$$H^m(X, \mathcal{O}_X)= H^m(\mathbb{P}, i_*(\mathcal{O}_X) .$$ 
 If $\K$ is an algebraic closure of $\mathbb{Z}/p$,
then there will be a model $Y_1$ of $Y$ over some finite extension $k_1$ of
$\mathbb{Z}/p$, and we can assume $k_1$ is the field of constants of $Y_1$.  Thus 
$Y = \K \times_{k_1} Y_1$ and the Frobenius automorphism $F_1$ of $\K$ over $k_1$
extends to a pro-generator of $\mathrm{Gal}(Y/Y_1)  = \mathrm{Gal}(\K/k_1)$. 
Embed $\K(X)$ into a separable  closure $\K(Y)^s$ of $\K(Y)$.  Here $\K(Y)^s$
is Galois over $k_1(Y_1)$, and a positive integral power $F_1^m$ of $F_1$ will lie in $\mathrm{Gal}(k(Y)^s/k(X))$. We now let $k_0$ be the fixed field of $F_1^m$ acting on $\K$ and we let $Y_0 = k_0 \otimes_{k_1} Y_1$ to have part (ii) of the Corollary.
\end{proof}

We end this section with an other example of scheme $X$ satisfying  the properties in Theorem \ref{thm:main}

\begin{prop} Suppose $X$ is an ordinary elliptic curve over $\K$.
Translation by the order $p > 2$ subgroup $X[p](\K)$ of $\K$-points of 
order $p$ of $X$ defines an \'etale action of the cyclic group $G = 
\mathbb{Z}/p$ on $X$ over $\K$.  Define $D$
to be the very ample degree $p$ divisor on $X$ which is the sum of the points in $X[p](\K)$.   There
is a basis $\{x_0,x_1,\ldots,x_{p-1}\}$ over $\K$ for $H^0(X,\mathcal{O}_X(D))$ which is cyclically
permuted by the action of a generator $\sigma$ for $G$.  Such a basis gives a  $G$-equivariant
closed immersion $X \to \mathbb{P}^{p-1}_{\K}$ with $G$ acting on $\mathbb{P}^{p-1}_{\K}$ by cyclically
permuting the homogeneous coordinates $(x_0:\ldots:x_{p-1})$  When $p = 3$, this defines
$X$ as a curve in $\mathbb{P}^2_{\K}$ having the properties in Theorem \ref{thm:main}.
\end{prop}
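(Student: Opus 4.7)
The plan is to establish in order: (i) the divisor $D$ is $G$-stable so that $\sigma$ acts on $H^{0}(X,\mathcal{O}_X(D))$, (ii) this $\K[G]$-module is free of rank one (the regular representation), which yields the desired basis, and (iii) the resulting $G$-equivariant closed immersion has, when $p=3$, the properties of Theorem~\ref{thm:main}.

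First I would observe that translation by any $P \in X[p](\K)$ permutes the set $X[p](\K)$ bijectively, so the effective divisor $D = \sum_{P \in X[p](\K)} [P]$ is $G$-invariant and $\sigma$ acts on the linear system. By Riemann--Roch on the elliptic curve $X$, since $\deg D = p \geq 3$, the divisor $D$ is very ample and $\dim_{\K} H^{0}(X,\mathcal{O}_X(D)) = p$.

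Next I would pin down the $\K[G]$-module structure of $H^{0}(X,\mathcal{O}_X(D))$. Since the action of $G$ is free, $\pi : X \to Y = X/G$ is \'etale of degree $p$ and the $G$-orbit $X[p](\K)$ is the scheme-theoretic fiber $\pi^{-1}(Q)$ over the point $Q = \pi(0) \in Y$, so $D = \pi^{\ast}Q$. The projection formula then gives $\pi_{\ast}\mathcal{O}_X(D) = \mathcal{O}_Y(Q) \otimes_{\mathcal{O}_Y} \pi_{\ast}\mathcal{O}_X$, and taking $G$-invariants followed by global sections yields
\begin{equation*}
H^{0}(X,\mathcal{O}_X(D))^{G} = H^{0}\bigl(Y,\mathcal{O}_Y(Q)\otimes (\pi_{\ast}\mathcal{O}_X)^{G}\bigr) = H^{0}(Y,\mathcal{O}_Y(Q)),
\end{equation*}
which is one-dimensional because $Q$ has degree one on the elliptic curve $Y$. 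Now $\K[G] = \K[t]/(t^{p})$ with $t = \sigma - 1$ is local with indecomposable modules $\K[t]/(t^{j})$ for $1 \leq j \leq p$, each of which has one-dimensional $G$-fixed subspace; hence any $p$-dimensional $\K[G]$-module with one-dimensional invariants is itself indecomposable and isomorphic to $\K[t]/(t^{p}) = \K[G]$. Choosing a $\K[G]$-generator $x_{0}$ of $H^{0}(X,\mathcal{O}_X(D))$ gives the basis $x_{i} = \sigma^{i}(x_{0})$, $0 \leq i \leq p-1$, which is cyclically permuted by $\sigma$. Since $D$ is very ample, the complete linear system yields a $G$-equivariant closed immersion $X \hookrightarrow \mathbb{P}^{p-1}_{\K}$ on which $\sigma$ cyclically permutes the homogeneous coordinates.

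Finally, for $p = 3$ the image is a smooth irreducible plane cubic, hence a smooth irreducible hypersurface of degree $3 = p$ in $\mathbb{P}^{2}_{\K}$; the $G$-action is fixed-point free because translation by a nonzero point on an elliptic curve has no fixed points; and the quotient $\pi : X \to Y$ is \'etale with $Y$ a smooth irreducible projective curve of dimension $n = p - 2 = 1$. These are exactly the conclusions of Theorem~\ref{thm:main}. The main technical step is (ii), and in particular the identification $D = \pi^{\ast}Q$, which is what reduces the $G$-invariant computation to a one-line application of the projection formula on the quotient elliptic curve.
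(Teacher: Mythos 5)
Your proof is correct, but it reaches the key conclusion --- that $H^0(X,\mathcal{O}_X(D))$ is a free $\K[G]$-module of rank one --- by a genuinely different route from the paper. The paper argues via equivariant Euler characteristics: since the $G$-action is \'etale, the class $[H^0(X,\mathcal{O}_X(D))]-[H^1(X,\mathcal{O}_X(D))]$ in $G_0(\K[G])$ lies in the image of the injective map $K_0(\K[G])\to G_0(\K[G])$, and since $H^1$ vanishes by Riemann--Roch, $H^0$ is projective, hence free of rank one by counting dimensions. You instead identify $D=\pi^*Q$ with $Q=\pi(0)$, use the projection formula and descent to compute $H^0(X,\mathcal{O}_X(D))^G=H^0(Y,\mathcal{O}_Y(Q))$, which is one-dimensional, and then invoke the classification of modules over the local ring $\K[G]\cong\K[t]/(t^p)$: a $p$-dimensional module whose invariants are one-dimensional must be indecomposable, hence the regular representation. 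Both arguments are sound; the paper's is shorter once one grants the Nakajima-type input on equivariant Euler characteristics of tame actions, while yours is more elementary and self-contained, at the cost of the extra (correct) verifications that $D$ is the fiber $\pi^{-1}(Q)$ and that invariants commute with twisting by $\mathcal{O}_Y(Q)$. The remaining steps (very ampleness of $D$, the $G$-equivariant embedding, and the verification of the properties of Theorem \ref{thm:main} for $p=3$) match the paper's treatment.
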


\begin{proof} Since $p \ge 3 = 2\cdot \mathrm{genus}(X)+1$ we see that $D$ is very ample
by \cite[Cor. IV.3.2]{Hartshorne}.  Since the action of $G$ on $X$ is \'etale, the equivariant Euler
characteristic $[H^0(X,\mathcal{O}_X(D)] - [H^1(X,\mathcal{O}_X(D)]$ in $G_0(\K[G])$ is in in the image
of the (injective) map $K_0(\K[G])\to G_0(\K[G])$.  Since $H^1(X,\mathcal{O}_X(D)) = 0$ by Riemann Roch Theorem,
we conclude that $H^0(X,\mathcal{O}_X(D))$ is a projective $kG$-module, which must be a free module
of rank one since $\# G = p$ and $\mathrm{dim}_{\K} H^0(X,\mathcal{O}_X(D)) = p$.  We now let
$x_0 \in H^0(X,\mathcal{O}_X(D))$
be a generator for $H^0(X,\mathcal{O}_X(D))$ as a free $\K G$-module and we define $x_i = \sigma^i x_0$ for $0 \le i \le p-1$.    
The Proposition is now clear.
\end{proof}

 \end{document}